\pdfoutput=1
\documentclass[pdflatex,sn-mathphys-num]{sn-jnl}

\usepackage{amsmath,amssymb,amsfonts}
\usepackage{amsthm}
\usepackage{dsfont}
\usepackage{booktabs}

\theoremstyle{thmstyleone}
\newtheorem{theorem}{Theorem}
\newtheorem{lemma}[theorem]{Lemma}
\newtheorem{proposition}[theorem]{Proposition}
\newtheorem{corollary}[theorem]{Corollary}

\theoremstyle{thmstyletwo}
\newtheorem*{remark}{Remark}

\theoremstyle{thmstylethree}
\newtheorem*{axiom}{Axiom}

\DeclareMathOperator{\supp}{supp}
\DeclareMathOperator{\Var}{Var}
\DeclareMathOperator{\dist}{dist}
\DeclareMathOperator*{\esssup}{ess\,sup}

\begin{document}

\title[Hierarchical symmetry selects log-Poisson cascades]%
{Hierarchical symmetry selects log-Poisson cascades:
classification, uniqueness, and stability}

\author*[1]{\fnm{E. M.} \sur{Freeburg}}\email{hqops@icloud.com}

\affil*[1]{\orgname{Independent Researcher}}

\abstract{Within i.i.d.\ multiplicative cascades, a single axiom---the
hierarchical symmetry, a linear contraction on incremental scaling
exponents---is shown to be necessary and sufficient for the cascade
multiplier to be log-Poisson.  We prove: (1)~a characterization
theorem determining the log-Poisson law with explicit parameters,
within the class of \emph{all} multipliers with finite lattice
moments; (2)~a classification theorem locating the log-Poisson class
inside the log-infinitely-divisible family and identifying the
mechanism by which every rival sub-family fails the symmetry; (3)~a
stability theorem with sharp constants---$(1+\beta)^{1/2}$ when the
limiting increment is known, $\sqrt2$ when it is fitted---and (4)~an
unconditional propagation theorem transferring the bound to the
multiplier distribution at the sharp rate $\Theta(\sqrt\varepsilon)$,
with a matching lower bound.  Beyond independence, the classification
is shown to extend \emph{exactly} at the level of asymptotic
statistics (limiting cumulant generating function, large deviations,
multifractal spectrum) and \emph{provably not} at the level of laws:
an explicit stationary ergodic Markov multiplier satisfies the
symmetry exactly with a non-log-Poisson marginal, while exchangeable
multipliers collapse to the i.i.d.\ log-Poisson cascade and
finite-state Markov multipliers cannot satisfy the symmetry at all.
In the continuous category of exactly scale-invariant
log-infinitely-divisible multifractal random measures, no finite
moment window of structure-function exponents identifies the cascade
class, whereas at the level of the scale-invariance generator the
symmetry selects exactly the Barral--Mandelbrot compound Poisson
cascade, with scale-ratio-free stability constants.  The proofs
reduce to second-moment identities on $[0,1]$ via the change of
variables $u = e^{kx}$, boundedness of the multiplier
($\esssup W = r^\gamma$), and multiplicative couplings.}

\keywords{Multiplicative cascades, Log-Poisson distribution,
Hierarchical symmetry, Multifractal spectrum, Hausdorff moment
problem, Multifractal random measures, Large deviations}

\pacs[MSC Classification]{60G57, 60E10, 60G51, 60F10, 28A80, 76F55}

\maketitle

\section{Introduction}

Multiplicative cascades model the successive fragmentation of a
conserved quantity across scales and arise in fully developed
turbulence \cite{K62,Man74}, rainfall, finance, and other settings
exhibiting intermittent, scale-invariant fluctuations.  The
mathematical foundations of multiplicative cascades were established
by Kahane and Peyri\`ere~\cite{KP76}; see Barral and
Mandelbrot~\cite{BM02} and Bacry and Muzy~\cite{BM03} for the modern
discrete and continuous theories.  The statistical properties of a
cascade are encoded in the scaling exponents $\zeta_p$ of the
structure functions $S_p(\ell) = \langle |\Phi(\ell)|^p \rangle \sim
\ell^{\zeta_p}$.  A central question is: \emph{which probability
distributions on the cascade multiplier $W$ are compatible with
observed scaling laws?}

Kolmogorov~\cite{K62} proposed log-normal multipliers, leading to
quadratic scaling exponents.  Z.-S.~She and
L\'ev\^eque~\cite{SL94} introduced a hierarchical symmetry for the
scaling exponents and derived a different, log-Poisson exponent
formula that has since shown excellent agreement with experimental
data.  Dubrulle~\cite{Dub94} independently identified the log-Poisson
form.  Z.-S.~She and Waymire~\cite{SW95} used the
L\'evy--Khintchine representation to argue that this symmetry selects
log-Poisson within the log-infinitely-divisible family; Dubrulle and
Graner~\cite{DG96} reached a similar conclusion via symmetry groups.
These works provided compelling physical arguments but did not supply
rigorous proofs.  Z.-S.~She and Zhang~\cite{SZ09} subsequently
proposed that the hierarchical symmetry is \emph{universal}---applicable
not only to turbulence but to general multi-scale fluctuation systems
including MHD turbulence, natural image statistics, and biological
signals---and should serve as a standard analytical framework.  The
present paper supplies the rigorous mathematical foundation for this
program.

We formalize the hierarchical symmetry as a single axiom~(A1) and
prove the following.

\emph{Characterization}
(Theorem~\ref{thm:characterization}).  A1 uniquely determines the
cascade multiplier $W$ to be log-Poisson, with parameters
$(\,a = \gamma\ln r,\; b = (\ln\beta)/k,\; \lambda = -C\ln r\,)$
expressed in terms of the observable scaling exponents.  No other
distribution---infinitely divisible or not---is compatible with~A1.

\emph{Classification}
(Theorem~\ref{thm:classification}).  Within the full
log-infinitely-divisible family, A1 selects exactly the log-Poisson
class, and the proof stratifies the exclusion: Gaussian components,
positive jumps, and stable generators of index $\alpha \in [1,2)$ are
eliminated by divergence of the incremental exponents; all remaining
generators are eliminated by a second-moment rigidity.

\emph{Stability}
(Theorem~\ref{thm:stability}).  If A1 holds only approximately, with
residuals bounded by~$\varepsilon$, then the (tilted, compactified)
L\'evy measure of the generator is within $K\sqrt{\varepsilon}$ of a
Dirac mass in the Wasserstein-1 metric, with the sharp constant
$K = ((1+\beta)|\ln r|/|A|)^{1/2}$ when the limiting increment
$\delta_\infty$ is known, and the sharp constant
$(2|\ln r|/|A|)^{1/2}$ when $\delta_\infty$ is fitted along with
$\beta$.

\emph{Propagation at the sharp rate}
(Theorems~\ref{thm:propagation} and~\ref{thm:lower}).  Closeness of
the L\'evy measure transfers to closeness of the multiplier
distribution at rate $O(\sqrt\varepsilon)$ \emph{unconditionally}---%
no finite-activity or minimum-jump hypothesis---with an explicit
constant; and an explicit family shows the rate
$\Theta(\sqrt\varepsilon)$ is exact.

\emph{Beyond independence}
(Section~\ref{sec:beyond}).  For stationary ergodic multipliers, A1
is equivalent to the limiting cumulant generating function---hence
the large-deviation rate and the multifractal spectrum---being
exactly log-Poisson (Theorem~\ref{thm:cgflevel}); but it provably
does not determine the multiplier law: an explicit stationary
ergodic countable-state Markov multiplier satisfies A1 exactly with
a non-log-Poisson marginal (Theorem~\ref{thm:interleaved}).
Exchangeable multipliers with exact scaling collapse to the i.i.d.\
log-Poisson cascade (Corollary~\ref{cor:exch}), and finite-state
Markov multipliers cannot satisfy A1 at all with nontrivial
intermittency (Theorem~\ref{thm:finitestate}).

\emph{Continuous cascades}
(Section~\ref{sec:mrm}).  In the Bacry--Muzy category of exactly
scale-invariant log-infinitely-divisible multifractal random
measures, structure-function exponents exist only on a finite moment
window, and \emph{no finite window identifies the cascade class}
(Theorem~\ref{thm:window}); at the level of the scale-invariance
generator---magnitude statistics, observable at all orders---A1
selects exactly the Barral--Mandelbrot compound Poisson cascade
(Theorem~\ref{thm:cpc}), with stability constants that are native
and scale-ratio-free (Corollary~\ref{cor:mrmstab}).

The converse---that log-Poisson multipliers imply
A1---is established in Proposition~\ref{prop:converse}, yielding a
biconditional equivalence (Corollary~\ref{cor:biconditional}).

\smallskip
\noindent\textbf{Relation to prior work.}
The exponent formula~\eqref{eq:zeta} (Lemma~\ref{lem:exponent})
and the log-Poisson identification are due to
Z.-S.~She and L\'ev\^eque~\cite{SL94} and Dubrulle~\cite{Dub94}.
Z.-S.~She and Waymire~\cite{SW95} gave the first argument connecting
A1 to the L\'evy--Khintchine classification.  The compound Poisson
cascades appear in Barral--Mandelbrot~\cite{BM02}; the log-ID
multifractal random measures in Bacry--Muzy~\cite{BM03,MB02};
magnitude-cumulant analysis in Delour--Muzy--Arneodo~\cite{DMA01}.
The following results are new: the boundedness and
moment-determinacy lemma (Lemma~\ref{lem:determinacy}); the converse
and biconditional (Proposition~\ref{prop:converse},
Corollary~\ref{cor:biconditional}); the classification with
stratified exclusion (Theorem~\ref{thm:classification}); the
determinacy dichotomy (Proposition~\ref{prop:determinacy}); the
stability theorem with sharp constants under both readings
(Theorem~\ref{thm:stability}); the unconditional sharp-rate
propagation theory (Theorems~\ref{thm:propagation},
\ref{thm:lower}); the beyond-i.i.d.\ classification and its
impossibility boundary (Theorems~\ref{thm:cgflevel},
\ref{thm:interleaved}, \ref{thm:finitestate},
Corollary~\ref{cor:exch}); and the continuous-cascade results
(Theorems~\ref{thm:window}, \ref{thm:cpc}).

\smallskip
\noindent\textbf{Method.}
The change of variables $u = e^{kx}$ maps the L\'evy measure from
$(-\infty,0]$ to the compact interval $[0,1]$, where a second-moment
identity against the candidate atom decides the classification and
its stability.  A1 forces the multiplier to be essentially bounded
with $\esssup W = r^\gamma$, which yields moment determinacy
directly and eliminates all tail estimates from the propagation
argument; the propagation itself is a multiplicative coupling in
which jumps near the identity are costed by their multiplicative
deviation rather than counted.  Beyond independence the arguments
run at the level of limiting cumulant generating functions; in the
continuous category they transfer through the dictionary
$\ln r \mapsto -1$.

\smallskip
Throughout this paper, $\log$ denotes the natural logarithm.

\section{Setup}
\label{sec:setup}

Let $r \in (0,1)$ be a scale ratio.  A \emph{multiplicative cascade}
generates a random positive measure $\mu$ on nested sets $B_0 \supset
B_1 \supset \cdots$ via
\[
  \mu(B_{n+1}) = W_{n+1} \cdot \mu(B_n),
\]
where $\{W_n\}$ are i.i.d.\ positive random variables with
$\mathbb{E}[W] = 1$ (conservation of mean; a normalization
convention---none of the proofs below depend on it, cf.\
Corollary~\ref{cor:conservation}).  Sections~\ref{sec:beyond}
and~\ref{sec:mrm} relax, respectively, the independence assumption
and the discrete setting.

Let $\Phi(\ell)$ be the cascade observable at scale $\ell = r^n$.
Define the \emph{structure functions}
\[
  S_p(\ell) = \langle |\Phi(\ell)|^p \rangle = \ell^{\zeta_p},
\]
where $\zeta_p$ are the \emph{scaling exponents}, with $\zeta_0 = 0$;
all moments of $W$ are assumed finite, so that $\zeta_p$ is finite
for every $p \ge 0$.  The relation $\mathbb{E}[W^p] = r^{\zeta_p}$
identifies the single-step moment structure of the multiplier with
the observable's scaling.

For a fixed integer $k \geq 1$ (the \emph{hierarchy step}), define
the \emph{moment ratios}
\[
  H_p(\ell) = \frac{S_{p+k}(\ell)}{S_p(\ell)} = \ell^{\delta_p},
\]
where $\delta_p = \zeta_{p+k} - \zeta_p$ are the \emph{incremental
exponents} at step~$k$.

\section{The axiom}
\label{sec:axiom}

\begin{axiom}[A1: Hierarchical Symmetry]
There exist $\beta \in (0,1)$ and $L \in \mathbb{R}$ such that for
all $p \in k\mathbb{N}_0$ (non-negative integer multiples of~$k$),
the incremental exponents satisfy
\begin{align}\label{eq:A1}
  \delta_{p+k}
  = (1 - \beta)\,L + \beta\,\delta_p.
  \tag{$*$}
\end{align}
\end{axiom}

Since \eqref{eq:A1} is a contraction, it forces
$\delta_{mk} \to L$ as $m \to \infty$; we write
$\delta_\infty := L$.  (Stating the axiom with a free constant $L$,
rather than defining $\delta_\infty$ as a limit inside the equation
that uses it, matters only for the approximate version in
Theorem~\ref{thm:stability}, where the distinction between the true
limit and a fitted constant is quantitatively significant.)

A1 determines the full parameter set from the observable exponents:
\begin{center}\small
\begin{tabular}{@{}lll@{}}
\toprule
Parameter & Determined by & Meaning \\
\midrule
$\beta$ & Contraction ratio of~\eqref{eq:A1} & Coupling strength \\
$\gamma$ & $\delta_\infty / k$ & Linear drift \\
$C$ & $(\delta_0 - \delta_\infty)/(1 - \beta)$ & Concentration amplitude \\
\bottomrule
\end{tabular}
\end{center}

\smallskip\noindent\textit{Edge case} ($C = 0$).  If $\delta_0 =
\delta_\infty$, then $C = 0$ and $\zeta_p = \gamma p$ (monofractal
scaling).  The cascade multiplier $W = r^\gamma$ is deterministic.
A1 is trivially satisfied for any $\beta \in (0,1)$.  The
classification and stability theorems assume $C > 0$ (nontrivial
intermittency).

\section{Results: the i.i.d.\ cascade}
\label{sec:results}

\begin{lemma}[Exponent Form]\label{lem:exponent}
If the incremental exponents $\{\delta_p\}$ satisfy the A1
recurrence~\eqref{eq:A1} with $\beta \in (0,1)$, then with $\zeta_0
= 0$:
\begin{align}\label{eq:zeta}
  \zeta_p = \gamma p + C\bigl(1 - \beta^{p/k}\bigr),
  \tag{$**$}
\end{align}
where $\gamma = \delta_\infty / k$ and $C = (\delta_0 -
\delta_\infty)/(1 - \beta)$.

We emphasize that this lemma is purely algebraic and involves no
probabilistic content.
\end{lemma}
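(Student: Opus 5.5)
Since the statement is purely algebraic, the plan is to solve the affine recurrence~\eqref{eq:A1} explicitly on the lattice $k\mathbb{N}_0$ and then telescope. First, subtract the fixed point $L=\delta_\infty$ from both sides of~\eqref{eq:A1} to get $\delta_{p+k}-\delta_\infty = \beta(\delta_p-\delta_\infty)$. Induction on $m$ then gives
\[
  \delta_{mk} = \delta_\infty + \beta^{m}(\delta_0-\delta_\infty)
  = \delta_\infty + (1-\beta)C\,\beta^{m},
  \qquad m\in\mathbb{N}_0,
\]
using the definition $C=(\delta_0-\delta_\infty)/(1-\beta)$, which is well defined since $\beta\neq1$.

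Next, recover $\zeta$ from its increments. Since $\delta_p=\zeta_{p+k}-\zeta_p$ and $\zeta_0=0$, telescoping gives, for $p=nk$,
\[
  \zeta_{nk} = \sum_{m=0}^{n-1}\delta_{mk}
  = n\,\delta_\infty + (1-\beta)C\sum_{m=0}^{n-1}\beta^m
  = n\,\delta_\infty + C(1-\beta^{n}).
\]
With $\gamma=\delta_\infty/k$ we have $n\delta_\infty=\gamma p$ and $\beta^n=\beta^{p/k}$, which is exactly~\eqref{eq:zeta}.

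The only real subtlety is the scope of the statement. The axiom constrains $\delta_p$ only for $p\in k\mathbb{N}_0$, and the telescoping uses only $\zeta_0=0$. So the formula is derived for $p\in k\mathbb{N}_0$, the lattice on which A1 is posed. For general real $p\ge0$, the recurrence gives no information about $\zeta_p$, because the offsets $\zeta_q$ with $q\in(0,k)$ are undetermined. I would therefore state explicitly that~\eqref{eq:zeta} holds on the lattice. Extending it to all $p$ is the step I expect to be the obstacle, and it cannot be done algebraically. If the paper needs all $p$, I would defer that extension to the later probabilistic argument (moment determinacy via Lemma~\ref{lem:determinacy}), where the lattice moments fix the law of $W$ and hence every $\zeta_p$. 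I would note this in a remark rather than build it into this lemma.
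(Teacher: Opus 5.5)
Your proposal is correct and follows the paper's proof essentially step for step: you shift by the fixed point, iterate to obtain $\delta_{mk}-\delta_\infty=(\delta_0-\delta_\infty)\beta^m$, and telescope with the geometric series from $\zeta_0=0$. Your scope caveat also matches the paper, which proves \eqref{eq:zeta} only on $k\mathbb{N}_0$, treats the off-lattice formula as a definition, and justifies its consistency only later, via the real-$p$ log-Poisson moment computation in the proof of Theorem~\ref{thm:characterization}(ii).
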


\begin{proof}
(1)~The recurrence~\eqref{eq:A1} is first-order linear with fixed
point $\delta_\infty$:
\[
  \delta_{p+k} - \delta_\infty = \beta\,(\delta_p - \delta_\infty).
\]

(2)~At $p = mk$ (integer multiples of~$k$), iteration gives
\[
  \delta_{mk} - \delta_\infty
  = (\delta_0 - \delta_\infty)\,\beta^m.
\]
Replacing $m = p/k$:
\[
  \delta_p
  = \delta_\infty + (\delta_0 - \delta_\infty)\,\beta^{p/k}.
\]
(This formula is derived at $p \in k\mathbb{N}_0$.  For general $p
\geq 0$, we define $\zeta_p$ by~\eqref{eq:zeta}; the function
$\beta^{p/k}$ is well-defined for all real $p \geq 0$ since $\beta >
0$.  The moment-based arguments in Lemma~\ref{lem:determinacy} and
Theorem~\ref{thm:characterization} use only the lattice
$p \in k\mathbb{N}_0$, where the formula is proved.)

(3)~With $\zeta_0 = 0$, sum the step-$k$ increments using the
geometric series:
\[
  \zeta_p
  = \frac{p}{k}\,\delta_\infty
    + \frac{\delta_0 - \delta_\infty}{1 - \beta}\,
      \bigl(1 - \beta^{p/k}\bigr).
\]
Identifying $\gamma = \delta_\infty/k$ and $C = (\delta_0 -
\delta_\infty)/(1-\beta)$:
\[
  \zeta_p = \gamma p + C\bigl(1 - \beta^{p/k}\bigr).  \qedhere
\]
\end{proof}

\begin{lemma}[Boundedness and Moment Determinacy]\label{lem:determinacy}
Let $\{W_n\}$ be an i.i.d.\ multiplicative cascade whose lattice
scaling exponents satisfy
$\zeta_{km} = \gamma km + C(1-\beta^{m})$ for all $m \in
\mathbb{N}_0$, as produced by A1 via Lemma~\ref{lem:exponent}.  Then:

\textup{(i)} $W$ is essentially bounded, with $\esssup W =
r^{\gamma}$;

\textup{(ii)} the law of $W$ is uniquely determined by the lattice
moments $\{\mathbb{E}[W^{km}]\}_{m \ge 0}$.
\end{lemma}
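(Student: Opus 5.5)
The plan is to read both claims off the growth rate of the lattice moments. By hypothesis and the relation $\mathbb{E}[W^p]=r^{\zeta_p}$,
\[
  M_m := \mathbb{E}[W^{km}] = r^{\gamma k m + C(1-\beta^m)}
  = r^{C}\,(r^{\gamma})^{km}\, r^{-C\beta^m}.
\]
Since $\beta\in(0,1)$, the factor $r^{-C\beta^m}$ tends to $1$ as $m\to\infty$. Consequently $M_m^{1/(km)} \to r^{\gamma}$.

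For (i), I will use the standard fact that for a nonnegative random variable the $L^q$ norms increase to the essential supremum: $\|W\|_q \uparrow \esssup W$ as $q\to\infty$, including the case where the supremum is $+\infty$. Applying this along the subsequence $q=km$ gives
\[
  \esssup W=\lim_{m\to\infty} M_m^{1/(km)} = r^{\gamma}<\infty .
\]
For completeness I will recall the two halves of that fact. The bound $\|W\|_q\le\esssup W$ is trivial. Conversely, for any $t<\esssup W$ the event $\{W>t\}$ has positive probability, so $\|W\|_q\ge t\,\mathbb{P}(W>t)^{1/q}\to t$. No sign condition on $C$ is needed here: when $C\ge 0$ the correction factor is bounded, and it converges to $1$ in any case.

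For (ii), once (i) holds, $W$ is supported in the compact interval $[0,r^\gamma]$. Substituting $V=W^k$, which takes values in $[0,r^{\gamma k}]$, the lattice moments become the ordinary integer moments $\mathbb{E}[V^m]$. A measure on a compact interval is determined by its integer moments: polynomials are dense in $C([0,r^{\gamma k}])$ by Weierstrass, and continuous functions determine a finite Borel measure (the Hausdorff moment problem, rescaled). So the law of $V$ is unique among all laws, since any competitor with the same moments also satisfies (i) and is therefore compactly supported. Because $W>0$ and $w\mapsto w^k$ is a bijection of $(0,\infty)$, the law of $W$ is determined by that of $V$.

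The only step needing care is the uniqueness claim "among all laws", not just among bounded ones. I will handle it by noting that part (i) applies to any multiplier with the same lattice moments, so every competitor is automatically bounded. Alternatively, Carleman's condition holds trivially because $M_m^{1/(km)}$ stays bounded. Beyond that, everything is routine.
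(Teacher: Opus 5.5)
Your proposal is correct and follows the paper's proof essentially step for step. Part (i) comes from $L^q$ norms increasing to the essential supremum along the lattice $q=km$, and part (ii) from the substitution $V=W^k$ together with Weierstrass density and the Riesz representation theorem (the Hausdorff moment problem). Your explicit observation, that any competitor with the same lattice moments is itself bounded by (i) and hence subject to the compact-support uniqueness argument, is exactly what makes ``unique among all laws'' go through; the paper leaves this step implicit.
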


\begin{proof}
Set $V = W^k \ge 0$.  By independence across cascade levels,
$\mathbb{E}[(W_1\cdots W_n)^p] = (\mathbb{E}[W^p])^n$; combined with
$S_p(\ell) = r^{n\zeta_p}$ this gives, for a single step,
$\mathbb{E}[V^m] = \mathbb{E}[W^{km}] = r^{\zeta_{km}}$ for every
$m \in \mathbb{N}_0$---these are exactly the moments constrained
by~A1.  Then
\[
  \bigl(\mathbb{E}[V^m]\bigr)^{1/m}
  \;=\; r^{\zeta_{km}/m}
  \;=\; r^{\,\gamma k + C(1-\beta^m)/m}
  \;\longrightarrow\; r^{\gamma k}
  \qquad (m \to \infty).
\]
On a probability space the norms $\|V\|_{L^m}$ are nondecreasing in
$m$ and converge to $\|V\|_{L^\infty}$; hence $V$ is essentially
bounded with $\|V\|_\infty = r^{\gamma k}$, and $W = V^{1/k}$ is
bounded with $\|W\|_\infty = r^{\gamma}$, proving~(i).

For~(ii): a probability law supported in the compact interval
$[0, r^{\gamma k}]$ is uniquely determined by its integer moments.
(Hausdorff moment problem: polynomials are uniformly dense in
$C([0,r^{\gamma k}])$ by the Weierstrass approximation theorem, so
two laws with equal moments integrate every continuous function
equally and coincide by the Riesz representation theorem.)  Hence
the law of $V$ is determined by $\{\mathbb{E}[V^m]\}$; since
$x \mapsto x^{1/k}$ is a Borel bijection of $[0,\infty)$, the law of
$W$ is determined as well.
\end{proof}

\begin{remark}
A Carleman-condition argument \cite{Akh65} is available here only
along the lattice (applied to $V = W^k$), since A1 constrains no
other moments when $k \ge 2$.  The boundedness route above is
shorter and stronger: statement~(i) identifies the essential
supremum of the multiplier with the most-singular scaling factor
$r^\gamma$, a fact used again in Theorem~\ref{thm:propagation}.
\end{remark}

\begin{theorem}[Characterization]\label{thm:characterization}
Let $\{W_n\}$ be an i.i.d.\ multiplicative cascade whose incremental
scaling exponents satisfy A1.  Then:

\textup{(i)} \textbf{Scaling exponents.}
\[
  \zeta_p = \gamma p + C\bigl(1 - \beta^{p/k}\bigr),
\]
where $\gamma = \delta_\infty/k$ and $C = (\delta_0 -
\delta_\infty)/(1-\beta)$.

\textup{(ii)} \textbf{Uniqueness.}  The cascade multiplier $W$ is
uniquely determined to be log-Poisson:
\[
  \log W = a + bN, \qquad N \sim \mathrm{Poisson}(\lambda),
\]
\[
  a = \gamma \ln r, \quad
  b = \frac{\ln\beta}{k}, \quad
  \lambda = -C\ln r.
\]
No other probability distribution on~$W$ is compatible with A1.

\textup{(iii)} \textbf{Multifractal spectrum.}  Let $d$ denote the
spatial dimension of the cascade support.  Then
\[
  f(h) = d - C + Cx(1 - \ln x),
  \qquad
  x = \frac{k(h - \gamma)}{C|\ln\beta|},
\]
defined for $h \in [\gamma,\; \gamma + (C/k)|\ln\beta|]$.
\end{theorem}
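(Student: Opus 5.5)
Part~(i) is immediate from Lemma~\ref{lem:exponent}: A1 gives the recurrence~\eqref{eq:A1}, and the lemma turns it into~\eqref{eq:zeta} on the lattice $p \in k\mathbb{N}_0$. That lattice is all the later parts need.

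For~(ii) the plan is to exhibit a log-Poisson candidate with the right lattice moments and then invoke Lemma~\ref{lem:determinacy}(ii). Let $W_* = e^{a+bN}$ with $N \sim \mathrm{Poisson}(\lambda)$ and the stated $(a,b,\lambda)$. The Poisson generating function $\mathbb{E}[e^{sN}] = \exp(\lambda(e^{s}-1))$ gives
\[
  \mathbb{E}[W_*^{p}] = \exp\bigl(pa + \lambda(e^{pb}-1)\bigr)
  = r^{\gamma p}\exp\bigl(-C\ln r\,(\beta^{p/k}-1)\bigr)
  = r^{\gamma p + C(1-\beta^{p/k})} = r^{\zeta_p},
\]
since $e^{pb} = \beta^{p/k}$. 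This holds for every $p \ge 0$, in particular on the lattice. By the standing setup, an i.i.d.\ cascade satisfying A1 has $\mathbb{E}[W^{km}] = r^{\zeta_{km}}$. The laws of $W$ and $W_*$ therefore share all lattice moments. Lemma~\ref{lem:determinacy}(i) shows $W$ is bounded, and $W_*$ is bounded too because $b<0$ forces $W_* \le e^a = r^\gamma$. Lemma~\ref{lem:determinacy}(ii) then gives $W \overset{d}{=} W_*$. Note that $\lambda = -C\ln r \ge 0$ because $C \ge 0$, which holds since $\delta_0 \ge \delta_\infty$ (the pure-monofractal case $C=0$ gives $\lambda = 0$ and deterministic $W$). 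To cover $C<0$, I would observe that a negative $\lambda$ would violate Lemma~\ref{lem:determinacy}(i): one would get $\|W\|_{L^{km}}$ approaching $r^\gamma$ from above, contradicting monotonicity of $L^m$ norms. This sign check is the only delicate point in~(ii), and I would settle it by that monotonicity argument. Uniqueness ("no other distribution") is exactly the determinacy statement.

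For~(iii) I would use the standard Legendre-transform multifractal formalism, $f(h) = d + \inf_p (ph - \zeta_p)$ with the convention $\tau(p)=\zeta_p - d$. The sign conventions need to be fixed consistently with the paper, since the stated $f$ has maximum $d$ at $x=1$. Minimizing $ph - \gamma p - C(1-\beta^{p/k})$ over $p$ gives the stationarity condition $h - \gamma = -(C/k)\ln\beta\,\beta^{p/k}$. Setting $x = \beta^{p/k} = k(h-\gamma)/(C|\ln\beta|)$, we have $p = k\ln x/\ln\beta$, and substituting yields
\[
  f(h) = d + (h-\gamma)p - C + Cx = d - C + Cx - Cx\ln x,
\]
using $(h-\gamma)p = -Cx\ln x$. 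Convexity in $p$ (because $-C\beta^{p/k}$ is concave in $p$, so $ph-\zeta_p$ is convex) makes this stationary point the minimizer. The range $p \in [0,\infty)$ corresponds to $x \in (0,1]$, which is $h \in (\gamma, \gamma + (C/k)|\ln\beta|]$. The endpoint $h=\gamma$ is included by continuity, with $f(\gamma) = d-C$.

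The main obstacle is not analytic but definitional in~(iii): stating precisely which multifractal formalism is meant (moments only for $p \ge 0$, and the sign/dimension convention relating $\zeta_p$ to $\tau$) so that the Legendre transform reproduces the displayed formula. Part~(ii) is routine given Lemma~\ref{lem:determinacy}.
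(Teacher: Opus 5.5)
Your proposal is correct and follows the paper's proof essentially step for step. Part (i) comes from Lemma~\ref{lem:exponent}; part (ii) matches the lattice moments of the explicit log-Poisson candidate and then invokes Lemma~\ref{lem:determinacy}(ii); part (iii) is the same Legendre-transform computation with $x=\beta^{p/k}$. Your additional check that $C<0$ is impossible, via monotonicity of $\|V\|_{L^m}$ (equivalently, convexity of the cumulant function), justifies the sign $\lambda=-C\ln r\ge 0$, which the paper simply asserts.
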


\begin{proof}
\textit{(i)} Immediate from Lemma~\ref{lem:exponent}.

\textit{(ii)} Set $a = \gamma\ln r$, $b = (\ln\beta)/k$, $\lambda =
-C\ln r > 0$.  For $\log W = a + bN$ with $N \sim
\mathrm{Poisson}(\lambda)$:
\[
  \mathbb{E}[W^p] = e^{ap} \cdot \exp\bigl[\lambda(e^{bp} - 1)\bigr]
  \qquad\text{for all real } p \ge 0,
\]
and since $e^{bp} = \beta^{p/k}$, the choice $\lambda = -C\ln r$
gives $\lambda(\beta^{p/k}-1) = C\ln r\,(1-\beta^{p/k})$, so that
\[
  \mathbb{E}[W^{km}]
  = e^{(\gamma\ln r) km}\, e^{C\ln r\,(1-\beta^{m})}
  = r^{\zeta_{km}}
  \qquad\text{for every } m \in \mathbb{N}_0 .
\]
Thus the log-Poisson law realizes exactly the lattice moments of the
cascade multiplier.  By Lemma~\ref{lem:determinacy}(ii) the lattice
moments uniquely determine the law; hence $W$ is log-Poisson, and no
other distribution is possible.  (The same matching holds at every
real $p \ge 0$, so the extension of~\eqref{eq:zeta} off the lattice
is consistent.)

\textit{(iii)}  The singularity spectrum $f(h) = \inf_p\,[ph -
\zeta_p + d]$.  Setting the derivative to zero:
\[
  h - \gamma + \frac{C}{k}(\ln\beta)\,\beta^{p/k} = 0
  \quad\Longrightarrow\quad
  h - \gamma = \frac{C|\ln\beta|}{k}\,\beta^{p/k}.
\]
Define $x = \beta^{p/k} = k(h-\gamma)/(C|\ln\beta|)$.  Then $p =
-k\ln x / |\ln\beta|$ and
\[
  p(h - \gamma) = \frac{-k\ln x}{|\ln\beta|}
    \cdot \frac{C|\ln\beta|}{k}\,x = -Cx\ln x.
\]
Therefore
\[
  f(h) = d - C + Cx(1 - \ln x), \qquad
  x = \frac{k(h-\gamma)}{C|\ln\beta|}.
\]
Boundary checks: $p = 0 \Rightarrow x = 1 \Rightarrow f = d$; $p \to
\infty \Rightarrow x \to 0 \Rightarrow f \to d - C$.  Concavity:
$\zeta_p'' = -(C/k^2)(\ln\beta)^2\beta^{p/k} < 0$.
\end{proof}

\begin{corollary}[Most-singular branch]\label{cor:branch}
Under A1, $W \le r^\gamma$ almost surely, and the bound is attained
with positive probability: $\mathbb{P}(W = r^\gamma) = e^{-\lambda}$.
The probability that a cascade trajectory takes the maximal factor
for $n$ consecutive levels is
\[
  e^{-\lambda n} \;=\; r^{Cn} \;=\; \ell^{\,C}
  \qquad\text{at scale } \ell = r^n :
\]
the set of always-maximal cascade paths carries codimension exactly
$C$, in agreement with $f(h_{\min}) = d - C$ in
Theorem~\textup{\ref{thm:characterization}(iii)}.
\end{corollary}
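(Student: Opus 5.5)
The plan is to read everything off the explicit law delivered by Theorem~\ref{thm:characterization}(ii): under A1, $\log W = a + bN$ with $N\sim\mathrm{Poisson}(\lambda)$, $a=\gamma\ln r$, $b=(\ln\beta)/k<0$ and $\lambda=-C\ln r>0$ (we assume $C>0$, as for the classification results). Since $b<0$ and $N\ge 0$, we have $\log W\le a$ pointwise, so $W\le e^{a}=r^\gamma$ almost surely. This also recovers Lemma~\ref{lem:determinacy}(i) independently. Equality holds exactly on the event $\{N=0\}$, because $b\neq 0$. That event has probability $e^{-\lambda}>0$, which proves the first two claims.

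For the path statement, along a single trajectory the multipliers $W_1,\dots,W_n$ are i.i.d. Hence the probability that all $n$ of them equal $r^\gamma$ is $\mathbb{P}(W=r^\gamma)^n=e^{-\lambda n}$. Substituting $\lambda=-C\ln r$ gives $e^{-\lambda n}=e^{Cn\ln r}=r^{Cn}$, and with $\ell=r^n$ this is $\ell^{C}$.

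The codimension statement is an interpretation, so I would make it precise rather than prove a new theorem. In a $d$-dimensional cascade the number of cells at scale $\ell$ is of order $\ell^{-d}$, so the expected number of always-maximal cells is of order $\ell^{-d}\ell^{C}=\ell^{-(d-C)}$. The box-counting exponent of the always-maximal set is therefore $d-C$, i.e.\ codimension $C$. These cells have local Hölder exponent $h=\log(r^{\gamma n})/\log r^n=\gamma=h_{\min}$, the left endpoint of the domain in Theorem~\ref{thm:characterization}(iii). That theorem's boundary check ($x\to0$) gives $f(h_{\min})=d-C$, which matches.

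The only delicate point is this last identification. I would phrase it as agreement of expected counting exponents, i.e.\ a first-moment heuristic matching the Legendre-transform value, rather than claiming an almost-sure Hausdorff dimension. Everything else is immediate from the characterization.
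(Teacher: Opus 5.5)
Your proof is correct and follows the paper's own argument: write $W = r^\gamma\beta^{N/k}$ from Theorem~\ref{thm:characterization}(ii), note equality iff $N=0$ (probability $e^{-\lambda}$), then use independence across levels to get $e^{-\lambda n} = r^{Cn} = \ell^{C}$. The paper states the codimension claim only as an interpretation, so your hedged first-moment counting paragraph is a reasonable addition; one small correction is that reading $W \le r^\gamma$ off the characterization does not recover Lemma~\ref{lem:determinacy}(i) \emph{independently}, since Theorem~\ref{thm:characterization}(ii) was itself proved using that lemma.
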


\begin{proof}
By Theorem~\ref{thm:characterization}(ii), $W = r^\gamma
\beta^{N/k}$ with $N \sim \mathrm{Poisson}(\lambda)$, so $W \le
r^\gamma$ with equality iff $N = 0$, an event of probability
$e^{-\lambda}$.  By independence across levels, $n$ consecutive
maximal factors have probability $e^{-\lambda n} =
e^{(C\ln r) n} = r^{Cn}$.
\end{proof}

\begin{remark}[Scope of Theorem~\ref{thm:characterization}]
No infinite-divisibility assumption enters
Theorem~\ref{thm:characterization}: A1 characterizes the log-Poisson
law within the class of \emph{all} nonnegative multipliers with
finite lattice moments.  Theorem~\ref{thm:classification} below is
therefore not a larger uniqueness statement but an anatomical one:
it locates the log-Poisson class inside the L\'evy--Khintchine
parameterization and identifies the mechanism by which each rival
sub-family fails A1.  Its proof technique---the compactifying
substitution $u = e^{kx}$---is also the engine of the stability and
propagation theory.
\end{remark}

\begin{remark}[Conservation]
The setup assumes $\mathbb{E}[W] = 1$, which requires $\zeta_1 = 0$.
Substituting into~\eqref{eq:zeta}: $\gamma + C(1 - \beta^{1/k}) =
0$, giving $\gamma = -C(1-\beta^{1/k})$.  This is a constraint
relating $\gamma$ to~$C$ and~$\beta$, reducing the free parameters
from three to two.
\end{remark}

\begin{proposition}[Converse]\label{prop:converse}
If the cascade multiplier $W$ is log-Poisson---\allowbreak that is,
$\log W = a + bN$ with $N \sim \mathrm{Poisson}(\lambda)$, $b < 0$, $\lambda >
0$---then the incremental scaling exponents satisfy A1 with $\beta =
e^{bk} \in (0,1)$.
\end{proposition}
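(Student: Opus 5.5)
The plan is a direct computation of the moment generating function of the multiplier, followed by reading off the incremental exponents and checking that they obey the recurrence~\eqref{eq:A1}. For $\log W = a + bN$ with $N \sim \mathrm{Poisson}(\lambda)$, the Poisson Laplace transform gives, for every real $p \ge 0$,
\[
  \mathbb{E}[W^p] = e^{ap}\,\mathbb{E}\bigl[e^{bpN}\bigr]
  = \exp\bigl[ap + \lambda(e^{bp}-1)\bigr].
\]
All moments are finite, since $b<0$ forces $e^{bpN}\le 1$. By the relation $\mathbb{E}[W^p] = r^{\zeta_p}$ from Section~\ref{sec:setup}, and using independence across levels so that the single-step moments determine $S_p(\ell)$ at $\ell = r^n$, we obtain
\[
  \zeta_p = \frac{ap + \lambda(e^{bp}-1)}{\ln r}.
\]
This satisfies $\zeta_0 = 0$ automatically.

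Next I would compute the incremental exponents:
\[
  \delta_p = \zeta_{p+k}-\zeta_p
  = \frac{ak + \lambda e^{bp}(e^{bk}-1)}{\ln r}.
\]
Set $\beta = e^{bk}$. Since $b<0$ and $k \ge 1$, we have $\beta \in (0,1)$. Put
\[
  L = \frac{ak}{\ln r}, \qquad D = \frac{\lambda(\beta-1)}{\ln r}.
\]
Then $\delta_p = L + D\,e^{bp}$, so $\delta_{p+k} - L = \beta\,(\delta_p - L)$. This is exactly~\eqref{eq:A1}, i.e.\ $\delta_{p+k} = (1-\beta)L + \beta\delta_p$. It holds for every real $p\ge0$, and in particular on $k\mathbb{N}_0$.

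For consistency with Lemma~\ref{lem:exponent}, I would also record the induced parameters:
\[
  \gamma = \frac{L}{k} = \frac{a}{\ln r}, \qquad
  C = \frac{\delta_0 - L}{1-\beta} = -\frac{\lambda}{\ln r}.
\]
Since $\lambda>0$ and $\ln r<0$, we get $C>0$. These recover $a = \gamma \ln r$, $\lambda = -C\ln r$ and $b = (\ln\beta)/k$, matching Theorem~\ref{thm:characterization}(ii). This step is what makes the statement a genuine converse, and it feeds Corollary~\ref{cor:biconditional}.

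There is no real obstacle here; the argument is a one-line identity. The only point needing care is the identification $\mathbb{E}[W^p]=r^{\zeta_p}$, which relies on the i.i.d.\ structure: $S_p(r^n) = (\mathbb{E}[W^p])^n$, so the exponent is well defined independently of $n$.
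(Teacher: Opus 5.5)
Your proof is correct and is essentially the paper's argument. Both compute $\mathbb{E}[W^p]=\exp[ap+\lambda(e^{bp}-1)]$, read off $\delta_p = ak/\ln r + \lambda(\beta-1)\beta^{p/k}/\ln r$ with $\beta=e^{bk}$, and check that the deviation from the constant term contracts by $\beta$ at each step. The only cosmetic difference is that you name the constant $L$ directly, matching the free-constant form of A1, whereas the paper identifies it as $\delta_\infty$ by letting $p\to\infty$; your added check that $C=-\lambda/\ln r>0$ is correct.
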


\begin{proof}
The moment generating function gives $\mathbb{E}[W^p] = \exp(ap +
\lambda(e^{bp} - 1))$, so $\zeta_p = \bigl(ap + \lambda(e^{bp} -
1)\bigr)/\ln r$.  The step-$k$ increments are
\[
  \delta_p = \frac{ak + \lambda e^{bp}(e^{bk}-1)}{\ln r}.
\]
Setting $\beta = e^{bk} \in (0,1)$ (since $b < 0$, $k \geq 1$):
\[
  \delta_p = \frac{ak}{\ln r}
  + \frac{\lambda(\beta - 1)}{\ln r}\,\beta^{p/k}.
\]
As $p \to \infty$: $\beta^{p/k} \to 0$, so $\delta_\infty = ak/\ln
r$.  The deviation is
\[
  \delta_p - \delta_\infty
  = \frac{\lambda(\beta - 1)}{\ln r}\,\beta^{p/k}.
\]
At $p + k$:
\[
  \delta_{p+k} - \delta_\infty
  = \frac{\lambda(\beta-1)}{\ln r}\,\beta^{(p+k)/k}
  = \beta\,(\delta_p - \delta_\infty).
\]
Therefore $\delta_{p+k} = (1-\beta)\delta_\infty + \beta\delta_p$,
which is exactly A1.
\end{proof}

\begin{corollary}[Biconditional]\label{cor:biconditional}
Within i.i.d.\ multiplicative cascades, A1 is necessary and
sufficient for log-Poisson:
\[
  \text{A1 holds}
  \;\;\Longleftrightarrow\;\;
  W \text{ is log-Poisson (with } b < 0\text{).}
\]
The forward direction is
Theorem~\textup{\ref{thm:characterization}(ii)}; the reverse is
Proposition~\textup{\ref{prop:converse}}.
\end{corollary}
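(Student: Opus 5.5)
The biconditional is assembled directly from the two directions already proved, and the plan is to check that the hypotheses of each match the statement. For the forward direction, assume an i.i.d.\ cascade whose incremental exponents satisfy A1 with nontrivial intermittency $C>0$. Theorem~\ref{thm:characterization}(ii) then gives $\log W = a + bN$ with $N\sim\mathrm{Poisson}(\lambda)$, where $a=\gamma\ln r$, $b=(\ln\beta)/k$ and $\lambda=-C\ln r$. Since $\beta\in(0,1)$ and $k\ge1$, we have $b<0$. Since $r\in(0,1)$ and $C>0$, we have $\lambda>0$. So $W$ is log-Poisson in the sense of the statement.

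For the reverse direction, assume $\log W = a+bN$ with $b<0$ and $\lambda>0$. Proposition~\ref{prop:converse} gives A1 with $\beta=e^{bk}\in(0,1)$ and $L=\delta_\infty=ak/\ln r$. Finiteness of all moments, which the setup requires, holds because $\mathbb{E}[W^p]=\exp(ap+\lambda(e^{bp}-1))<\infty$ for every $p\ge0$. The conservation normalization $\mathbb{E}[W]=1$ plays no role in either direction.

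The only real obstacle is the degenerate case, and I would address it in a remark attached to the corollary. If $C=0$, A1 holds with a deterministic multiplier $W=r^\gamma$. This corresponds to $\lambda=0$, which is a degenerate log-Poisson law and is excluded by the requirement $\lambda>0$ in Proposition~\ref{prop:converse}. I would therefore state the biconditional under the standing convention $C>0\Leftrightarrow\lambda>0$, noting that $\lambda=-C\ln r$ and $\ln r<0$ make the two conditions equivalent, and read the monofractal edge case as the $\lambda=0$ limit on both sides.

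I would also note that the parameters transform consistently in both directions. Starting from A1, passing to the log-Poisson law and applying the converse returns the same $\beta$, because $e^{bk}=\beta$. It also returns the same $\delta_\infty$, because $ak/\ln r=\gamma k=\delta_\infty$. The correspondence is therefore a bijection between A1 parameter triples $(\beta,\gamma,C)$ with $C>0$ and log-Poisson triples $(a,b,\lambda)$ with $b<0$, $\lambda>0$.
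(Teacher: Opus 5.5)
Your proposal is correct and is exactly the paper's argument: the corollary is the conjunction of Theorem~\ref{thm:characterization}(ii) and Proposition~\ref{prop:converse}, and your checks ($b<0$, $\lambda=-C\ln r>0$, finiteness of all moments, recovery of the same $\beta$ and $\delta_\infty$) are the hypothesis-matching the paper leaves implicit. Restricting to $C>0$ matches the paper's edge-case convention. The remaining case $C<0$ needs no separate treatment: it would make the lattice moments $\mathbb{E}[W^{km}]$ fail log-convexity, so no multiplier satisfies A1 with $C<0$, and the forward direction is vacuous there.
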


\begin{theorem}[Log-ID Classification]\label{thm:classification}
Let $\{W_n\}$ be an i.i.d.\ multiplicative cascade with nontrivial
intermittency ($C > 0$), whose generator $\log W$ is infinitely
divisible with L\'evy triplet $(a, \sigma^2, \nu)$.  Then A1 holds
with $\beta \in (0,1)$ if and only if $\sigma^2 = 0$ and $\nu =
\lambda\delta_b$ for some $b < 0$, $\lambda > 0$.  That is:
\begin{center}
\emph{A1 selects exactly the log-Poisson class from the full
log-infinitely-divisible family.}
\end{center}
No other log-ID cascade---log-normal, log-stable, or any
intermediate---satisfies A1.
\end{theorem}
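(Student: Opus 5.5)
The reverse implication is Proposition~\ref{prop:converse}: if $\sigma^2=0$ and $\nu=\lambda\delta_b$ with $b<0$, then $\log W$ is log-Poisson and A1 holds with $\beta=e^{bk}$. For the forward implication one could simply invoke Theorem~\ref{thm:characterization}(ii), which already forces $W$ to be log-Poisson; uniqueness of the L\'evy triplet of an infinitely divisible law then gives $\sigma^2=0$ and $\nu=\lambda\delta_b$ with $b=(\ln\beta)/k<0$. Since the theorem is meant to be anatomical, I would instead give a direct argument in L\'evy--Khintchine variables that shows how each sub-family fails. I would still keep the determinacy route available as a one-line backup.

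\textbf{Step 1: divergence exclusions.} Write $\psi(p)=\ln\mathbb{E}[W^p]=ap+\tfrac12\sigma^2p^2+\int(e^{px}-1-px\mathds{1}_{|x|<1})\,\nu(dx)$, so that $\delta_{mk}=(\psi(mk+k)-\psi(mk))/\ln r$. A1 forces $\delta_{mk}\to L$, which is finite. This rules out three cases. If $\sigma^2>0$, the increment grows like $\sigma^2k^2m$. If $\nu$ charges $(0,\infty)$, then either moments blow up (contradicting the standing finite-moment assumption) or $\psi$ is superlinear, because the integral $\int_{x>0}e^{px}\,\nu(dx)$ grows exponentially in $p$. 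Equivalently, Lemma~\ref{lem:determinacy}(i) gives $\esssup W=r^\gamma<\infty$, so the support of $\log W$ is bounded above, which is impossible with $\sigma^2>0$ or positive jumps. Once $\nu$ lives on $(-\infty,0]$, the increment becomes $\psi(p+k)-\psi(p)=ak+\int(e^{(p+k)x}-e^{px}-kx\mathds{1}_{|x|<1})\,\nu(dx)$. Finiteness of the limit requires $\int_{|x|<1}|x|\,\nu(dx)<\infty$, which fails for stable-like generators of index $\alpha\in[1,2)$. So $\nu$ has finite first moment near $0$, and we may rewrite with a drift $a'$: $\psi(p+k)-\psi(p)=a'k+\int e^{px}(e^{kx}-1)\,\nu(dx)$.

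\textbf{Step 2: second-moment rigidity.} Substitute $u=e^{kx}\in[0,1)$ and set $g(m)=\int u^m(1-u)\,\nu(dx)$; the factor $(1-u)\sim k|x|$ makes this a finite measure $\mu(du)=(1-u)\,\nu\circ u^{-1}$ on $[0,1]$, with at most an atom at $0$ from $x=-\infty$, which is excluded. A1 at lattice points says the sequence $m\mapsto\psi(mk+k)-\psi(mk)-a'k=-g(m)$ satisfies $g(m+1)=\beta g(m)$. In other words, the moments of $\mu$ obey $\int u^{m+1}\,d\mu=\beta\int u^m\,d\mu$ for all $m\ge0$ (the constant $L$ is absorbed into $a'$). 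From this I would compute
\[
\int(u-\beta)^2u^m\,d\mu=\textstyle\int u^{m+2}\,d\mu-2\beta\int u^{m+1}\,d\mu+\beta^2\int u^m\,d\mu=0 .
\]
At $m=0$ this shows $\mu$ is concentrated at $u=\beta$. Hence $\nu=\lambda\delta_b$ with $e^{kb}=\beta$, and $C>0$ forces $\lambda>0$. Alternatively, Hausdorff determinacy on $[0,1]$ gives the same conclusion directly.

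The main obstacle is Step 1's stratification: the compensator bookkeeping and the claim that a generator that is not log-Poisson makes the increments diverge rather than merely converge to a different limit. The cleanest route is to lean on $\esssup W=r^\gamma$ from Lemma~\ref{lem:determinacy}, and to extract $\int_{|x|<1}|x|\,\nu(dx)<\infty$ by monotone convergence as $m\to\infty$, using the integrand $-(e^{(m+1)kx}-e^{mkx})$, which tends to $0$, against the compensator $-kx$.
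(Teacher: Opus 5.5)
Your proof is correct. Its second half is exactly the paper's Step~4: the substitution $u=e^{kx}$, the finite measure $(1-u)\,d\tilde\nu$ on $(0,1)$, and the variance identity $\int(u-\beta)^2\,d\mu=0$. Your one-line backup, Theorem~\ref{thm:characterization}(ii) plus uniqueness of the L\'evy--Khintchine triplet, is also valid; it is why the paper calls this theorem anatomical rather than a new uniqueness result.

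Where you genuinely differ is the exclusion stage. The paper does not use boundedness of $W$ there. It keeps the unsplit integrand $g_p(x)=e^{px}(e^{kx}-1)-kx\mathds{1}_{|x|\le1}$ and checks signs region by region. This gives the uniform lower bound $\int g_p\,d\nu\ge-\nu((-\infty,-1))$. Then a Gaussian part, positive jumps, or $\int_{[-1,0)}|x|\,d\nu=\infty$ each force $\delta_p\to-\infty$. The last case goes via Fatou; your monotone convergence works equally well, since $g_p\uparrow k|x|$ on $[-1,0)$.

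Your preferred route takes $\esssup W=r^\gamma$ from Lemma~\ref{lem:determinacy}(i) and then argues that a bounded-above law is impossible with $\sigma^2>0$ or positive jumps. This rests on the support theorem for infinitely divisible laws: the support is bounded above iff $\sigma^2=0$, $\nu$ lives on $(-\infty,0)$, and $\int_{[-1,0)}|x|\,d\nu<\infty$. You should cite or prove it. Granting it, your route disposes of all three exclusions at once, including the near-zero integrability, and is shorter. The paper's route is self-contained and shows each rival family failing by divergence of $\delta_p$. It uses only finiteness of $\liminf\delta_p$, which is what the paper reuses verbatim in Step~0 of Theorem~\ref{thm:stability}.

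Two small repairs:
\begin{enumerate}
\item Your direct sketch that ``the increment grows like $\sigma^2k^2m$'' is incomplete without a lower bound on the jump part. Either the sign-inventory bound above or convexity of $\psi$ supplies it.
\item The phrase ``$L$ is absorbed into $a'$'' should be replaced by an argument. By dominated convergence $g(m)\to0$, so $\phi(mk)\to a'k$ and therefore $L\ln r=a'k$. Without this, A1 only yields $g(m+1)-\beta g(m)=c$ for some constant $c$, and the variance identity would carry an extra term $(1-\beta)c$.
\end{enumerate}
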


\begin{proof}
\textit{Reverse direction.} If $\nu = \lambda\delta_b$ with $b < 0$
and $\sigma^2 = 0$, then $\log W = a + bN$ with $N \sim
\mathrm{Poisson}(\lambda)$, and A1 holds by
Proposition~\ref{prop:converse}.

\textit{Forward direction.}  Assume A1 holds.  We show $\sigma^2 = 0$
and $\nu = \lambda\delta_b$.

\textit{Step~1 (unsplit form).}  The cumulant generating function of
$\log W$ is
\[
  \psi(p) = ap + \frac{\sigma^2 p^2}{2}
  + \int\bigl(e^{px} - 1 - px\,\mathds{1}_{|x|\leq 1}\bigr)
  \,\nu(dx),
\]
finite for all $p \ge 0$ since all moments of $W$ are finite.  With
$\zeta_p = \psi(p)/\ln r$ and $\delta_p = (\psi(p+k) -
\psi(p))/\ln r$, define $\phi(p) = \psi(p+k) - \psi(p)$.  Then
\[
\begin{aligned}
  \phi(p) &= ak + \sigma^2 k\!\left(p + \tfrac{k}{2}\right)
  + \int g_p(x)\,\nu(dx),\\
  g_p(x) &:= e^{px}\bigl(e^{kx}-1\bigr) - kx\,\mathds{1}_{|x|\leq 1},
\end{aligned}
\]
where $g_p$ is $\nu$-integrable for each $p$, being the difference of
the two compensated L\'evy--Khintchine integrands.  \emph{No
splitting of the integral is performed at this stage.}

\textit{Step~1$'$ (sign inventory).}  For every $p \ge 0$:
\begin{itemize}
\item on $(0,1]$: $e^{px} \ge 1$ gives $g_p(x) \ge (e^{kx}-1) - kx
  \ge 0$, and $g_p(x) \uparrow \infty$ pointwise as $p \to \infty$;
\item on $(1,\infty)$: $g_p(x) = e^{px}(e^{kx}-1) \ge 0$, increasing
  to $+\infty$ pointwise;
\item on $[-1,0)$: $|e^{px}(e^{kx}-1)| \le 1 - e^{kx} \le k|x|$, so
  $0 \le g_p(x) \le k|x|$, with $g_p(x) \to k|x|$ pointwise as
  $p \to \infty$;
\item on $(-\infty,-1)$: $-1 \le g_p(x) \le 0$, with $g_p(x) \to 0$
  pointwise.
\end{itemize}
In particular $\int g_p\,d\nu \ge -\nu((-\infty,-1))$, uniformly in
$p$.

\textit{Step~2} ($\sigma^2 = 0$).  If $\sigma^2 > 0$ then, by
Step~1$'$,
\[
  \phi(p) \;\ge\; ak + \sigma^2 k\Bigl(p + \tfrac{k}{2}\Bigr)
  - \nu\bigl((-\infty,-1)\bigr) \;\longrightarrow\; +\infty,
\]
so $\delta_p = \phi(p)/\ln r \to -\infty$, contradicting the finite
limit $\delta_\infty$ forced by A1.  Hence $\sigma^2 = 0$.
\textit{This eliminates all log-normal and mixed Gaussian-jump
generators.}

\textit{Step~3} ($\supp(\nu) \subseteq (-\infty,0]$).  If $\nu$ has
mass on $(0,\infty)$ then, since $g_p \ge 0$ there and $g_p \uparrow
\infty$ pointwise, monotone convergence gives $\int_{(0,\infty)}
g_p\,d\nu \to \infty$, while $\int_{(-\infty,0)} g_p\,d\nu \ge
-\nu((-\infty,-1))$; again $\phi(p) \to \infty$ and $\delta_p \to
-\infty$, a contradiction.  Therefore $\supp(\nu) \subseteq
(-\infty,0]$.  \textit{This eliminates all generators with positive
jumps.}

\textit{Step~3\textonehalf} (integrability near~$0$).  We claim A1
forces $\int_{[-1,0)} |x|\,\nu(dx) < \infty$.  With $\sigma^2 = 0$
and $\supp\nu \subseteq (-\infty,0]$, apply Fatou's lemma on
$[-1,0)$ (integrand $g_p \ge 0$ by Step~1$'$, pointwise limit
$k|x|$) and dominated convergence on $(-\infty,-1)$ (bounded by~$1$,
finite mass):
\[
  \liminf_{p\to\infty} \phi(p)
  \;\ge\; ak + k\int_{[-1,0)} |x|\,\nu(dx)
  - \nu\bigl((-\infty,-1)\bigr).
\]
If $\int_{[-1,0)}|x|\,d\nu = \infty$ then $\phi(p) \to \infty$ and
$\delta_p \to -\infty$, contradicting A1.  Hence
$\int_{|x|\le1}|x|\,d\nu < \infty$, the compensator integral
$k\int x\,\mathds{1}_{|x|\le1}\,d\nu$ is finite, and \emph{only now}
may the integral be split:
\[
  \phi(p) = c_0 + \int_{(-\infty,0)} e^{px}\bigl(e^{kx}-1\bigr)
  \,\nu(dx),
  \qquad
  c_0 = ak - k\!\int x\,\mathds{1}_{|x|\leq 1}\,\nu(dx).
\]
The remaining integrand is dominated by $1 - e^{kx} \le
\min(1, k|x|) \in L^1(\nu)$ and tends to $0$ pointwise, so by
dominated convergence $\phi(p) \to c_0$ as $p \to \infty$; thus
$\phi_\infty = c_0$ and $\delta_\infty = c_0/\ln r$.

\textit{Step~4} ($\nu$ is a single Dirac mass).  A1 at $p = mk$
gives, by Lemma~\ref{lem:exponent}(2),
$\delta_{mk} - \delta_\infty = (\delta_0-\delta_\infty)\beta^m$;
multiplying by $\ln r$,
\begin{equation}\label{eq:dagger}
  \int_{(-\infty,0)} e^{mkx}\bigl(e^{kx}-1\bigr)\,\nu(dx)
  = A\beta^m
  \qquad\text{for all } m \geq 0,
\end{equation}
where $A = (\delta_0 - \delta_\infty)\ln r < 0$ (nontrivial
intermittency and $\ln r < 0$).  Substitute $u = e^{kx}$, mapping
$(-\infty,0) \to (0,1)$; let $\tilde\nu$ be the pushforward of $\nu$
and set
\[
  \eta := (1-u)\,d\tilde\nu \;\ge\; 0,
  \qquad
  \mu_m := \int_{(0,1)} u^m \, d\eta .
\]
Then \eqref{eq:dagger} reads $\mu_m = |A|\,\beta^m$ for all $m \ge
0$; the case $m=0$ shows $\eta$ is a finite positive measure of
total mass $|A|$.  Only $m \in \{0,1,2\}$ are needed:
\[
  \int_{(0,1)} (u-\beta)^2 \, d\eta
  \;=\; \mu_2 - 2\beta\mu_1 + \beta^2\mu_0
  \;=\; |A|\bigl(\beta^2 - 2\beta^2 + \beta^2\bigr)
  \;=\; 0 .
\]
Since $(u-\beta)^2 > 0$ on $(0,1)\setminus\{\beta\}$ and $\eta \ge
0$, we conclude $\eta\bigl((0,1)\setminus\{\beta\}\bigr) = 0$ and
$\eta(\{\beta\}) = |A|$.  Because $u - 1 \neq 0$ on $(0,1)$, the
tilt is invertible:
\[
  \tilde\nu = \frac{|A|}{1-\beta}\,\delta_\beta
  = \lambda\,\delta_\beta,
  \qquad \lambda = \frac{|A|}{1-\beta} > 0 .
\]
Therefore $\nu = \lambda\delta_b$ with $b = (\ln\beta)/k < 0$ and
$\lambda > 0$.  The generator $\log W$ is compound Poisson with
deterministic jump size~$b$ and rate~$\lambda$: this is the
log-Poisson distribution.
\end{proof}

\begin{remark}[Alternative identification; minimality]
The conclusion of Step~4 can also be reached from the full moment
sequence: a finite signed measure on a compact interval is determined
by its moments (Weierstrass approximation and the Riesz
representation theorem), and $A\delta_\beta$ realizes the
moments~\eqref{eq:dagger}.  The second-moment argument given above is
preferred because it (a)~uses only $m \in \{0,1,2\}$
of~\eqref{eq:dagger}, so that A1 restricted to $p \in \{0, k, 2k\}$,
together with Steps~2--3\textonehalf, already pins the distribution;
and (b)~is exactly the computation that the stability theorem
quantifies (see the Remark closing
Section~\ref{sec:propagation}).
\end{remark}

\begin{remark}[Which families die where]
The exclusion mechanism stratifies.  Gaussian components (Step~2),
positive jumps (Step~3), and negative-support L\'evy measures with
$\int_{|x|\le1}|x|\,d\nu = \infty$---in particular totally skewed
stable generators of index $\alpha \in [1,2)$---all fail A1 by
\emph{divergence}: $\delta_\infty = -\infty$
(Step~3\textonehalf).  All remaining log-ID generators have bounded
incremental exponents but fail the \emph{geometric rigidity} of
Step~4: e.g.\ for a stable generator of index $\alpha < 1$ the
moments $\mu_m$ decay like the power law $m^{\alpha-1}$, which cannot
equal $|A|\beta^m$ for any $\beta \in (0,1)$.
\end{remark}

\begin{corollary}[Principal cascade classes]\label{cor:partition}
The log-ID cascade family is partitioned by A1:
\begin{center}\footnotesize\setlength{\tabcolsep}{3pt}
\begin{tabular}{@{}lllll@{}}
\toprule
Class & L\'evy data & A1 & Failure mode & Determinate \\
\midrule
Log-Poisson & \begin{tabular}[t]{@{}l@{}}$\sigma^2=0$,\\
  $\nu=\lambda\delta_b$, $b<0$\end{tabular} & Holds
  & --- & Yes (bounded $W$) \\
\addlinespace
Log-normal & $\sigma^2>0$ &
  Fails & \begin{tabular}[t]{@{}l@{}}$\delta_\infty=-\infty$\\
  (Step 2)\end{tabular} & No \\
\addlinespace
Log-stable, $\alpha\in[1,2)$ & $\nu$ power-law &
  Fails & \begin{tabular}[t]{@{}l@{}}$\delta_\infty=-\infty$\\
  (Step 3\textonehalf)\end{tabular} & --- \\
\addlinespace
Log-stable, $\alpha<1$ & $\nu$ power-law &
  Fails & \begin{tabular}[t]{@{}l@{}}non-geometric\\
  decay (Step 4)\end{tabular} & Yes (bounded $W$) \\
\addlinespace
General log-ID & any other & Fails & Step 3 or Step 4 & --- \\
\bottomrule
\end{tabular}
\end{center}
Determinacy in this family tracks boundedness of the multiplier,
equivalently boundedness of $\{\delta_p\}$
(Lemma~\ref{lem:determinacy}(i)): every negative-support generator
with finite $\delta_\infty$ has compactly supported $W$, hence is
moment-determinate.  The operative dichotomy is bounded versus
unbounded, with A1 strictly on the bounded side.
\end{corollary}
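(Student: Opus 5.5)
The plan is to read off each row of the table from the stratified proof of Theorem~\ref{thm:classification}, and then to prove the determinacy column separately through a boundedness argument modelled on Lemma~\ref{lem:determinacy}(i).

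\textbf{Partition.} Let $(a,\sigma^2,\nu)$ be an arbitrary log-ID triplet with $C>0$. I would sort it by the first step of Theorem~\ref{thm:classification} at which it fails.
\begin{enumerate}
\item If $\sigma^2>0$, Step~2 gives $\delta_p\to-\infty$.
\item Otherwise, if $\nu((0,\infty))>0$, Step~3 gives $\delta_p\to-\infty$.
\item Otherwise, if $\int_{[-1,0)}|x|\,d\nu=\infty$, Step~3\textonehalf\ gives $\delta_p\to-\infty$. The totally skewed stable generators with $\alpha\in[1,2)$ land here, since $\int_{[-1,0)}|x|\cdot|x|^{-1-\alpha}\,dx=\infty$ exactly when $\alpha\ge1$.
\item In all remaining cases $\delta_\infty=c_0/\ln r$ is finite. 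Step~4 then says that A1 holds if and only if $\eta=(1-u)\,d\tilde\nu$ is a single atom at $\beta$, that is, $\nu=\lambda\delta_b$.
\end{enumerate}
Any other $\nu$, including the stable law with $\alpha<1$, fails the geometric identity~\eqref{eq:dagger}; the decay $m^{\alpha-1}$ of $\mu_m$ mentioned in the preceding remark is only illustrative. The four branches are mutually exclusive and exhaust the family, which gives the partition and the ``Failure mode'' column.

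\textbf{Determinacy along the bounded/unbounded dichotomy.} Suppose $\sigma^2=0$, $\supp\nu\subseteq(-\infty,0]$ and $\int_{|x|\le1}|x|\,d\nu<\infty$. Then the L\'evy--It\^o representation lets me write $\log W=a'+\sum(\text{jumps})$ with all jumps $\le0$, where the compensated drift is $a'=a-\int x\mathds{1}_{|x|\le1}\,d\nu=c_0/k$. Hence $W\le e^{c_0/k}$ almost surely.

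To tie this to the exponents without the explicit A1 form, I would repeat the argument of Lemma~\ref{lem:determinacy}(i):
\[
\|W^k\|_{L^m}=\exp\!\bigl(\psi(km)/m\bigr),
\]
and $\psi(km)/(km)\to\phi_\infty/k$ by Ces\`aro averaging of $\phi(p)=\psi(p+k)-\psi(p)\to c_0$. So $\esssup W=e^{c_0/k}$, and this is finite exactly when $\delta_\infty$ is finite. The support of $W$ then lies in a compact interval, and the Hausdorff/Weierstrass argument of Lemma~\ref{lem:determinacy}(ii) gives determinacy. This covers both the log-Poisson row and the $\alpha<1$ stable row.

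Conversely, in cases 1--3 we have $\delta_p\to-\infty$, so $\|W\|_{L^p}$ grows without bound and $W$ is unbounded. This proves the stated equivalence between boundedness of $W$ and boundedness of $\{\delta_p\}$.

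\textbf{Log-normal row and main obstacle.} The log-normal ``No'' entry is not a consequence of boundedness. For it I would cite the classical indeterminacy of the log-normal law: Heyde's family $f(x)\bigl(1+\epsilon\sin(2\pi\ln x)\bigr)$ has the same integer moments. The lattice version along $V=W^k$ follows because $V$ is again log-normal. The mixed Gaussian-jump laws are left as ``---'', so they need no claim.

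The main obstacle is the Ces\`aro step relating $\esssup W$ to $\delta_\infty$ for a general generator. It needs care with the possibility that $\phi(p)$ is finite yet fails to converge, but in the bounded-variation case dominated convergence (as in Step~3\textonehalf) supplies the limit. As a fallback I would use the direct pathwise bound $W\le e^{a'}$ and omit the equality of the supremum.
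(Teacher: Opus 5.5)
Your proposal is correct and follows the paper's route. The rows are read off Steps~2, 3, 3\textonehalf{} and~4 of Theorem~\ref{thm:classification}, with Proposition~\ref{prop:converse} supplying the reverse direction. The ``Yes'' entries come from the $L^m\to L^\infty$ norm argument of Lemma~\ref{lem:determinacy}(i), which you correctly extend beyond A1 by Ces\`aro-averaging $\phi(jk)\to c_0$, followed by Hausdorff determinacy. The log-normal ``No'' is Heyde's family, as in Proposition~\ref{prop:determinacy}(b).

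One small correction concerns the table. The Step~2 row is indexed by $\sigma^2>0$, so mixed Gaussian-jump generators belong to it, not to the ``---'' row, whose failure mode is Step~3 or~4. Its ``No'' entry does extend to them. Heyde-perturb only the Gaussian factor: this preserves every integer moment of $W$. The law still genuinely changes, because the independent infinitely divisible jump part has a nowhere-vanishing characteristic function.
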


\begin{proposition}[Determinacy Dichotomy]\label{prop:determinacy}
The two principal cascade exponent laws are distinguished by moment
determinacy:

\textup{(a)} If A1 holds within a cascade (log-Poisson regime), then
the scaling exponents uniquely determine the multiplier law
(Theorem~\textup{\ref{thm:characterization}(ii)}); indeed $W$ is
bounded and moment-determinate
(Lemma~\textup{\ref{lem:determinacy}}).

\textup{(b)} If the exponents are quadratic, $\zeta_p = c_1 p +
c_2 p^2$ with $c_2 < 0$ (\textup{\cite{K62}}/log-normal regime),
then $\mathbb{E}[W^p] = \exp(\mu p + \sigma^2 p^2/2)$ with $\mu =
c_1\ln r$ and $\sigma^2 = 2c_2\ln r > 0$, and this moment sequence is
indeterminate: uncountably many distinct laws realize it.  Under
quadratic scaling, the exponents cannot identify the multiplier law.
\end{proposition}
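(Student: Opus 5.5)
Part (a) needs no new argument. Under A1, Lemma~\ref{lem:exponent} gives the lattice exponents $\zeta_{km} = \gamma km + C(1-\beta^m)$. Lemma~\ref{lem:determinacy} then shows that $W$ is essentially bounded with $\esssup W = r^\gamma$, and that the law of $W$ is fixed by its lattice moments. Theorem~\ref{thm:characterization}(ii) identifies this law as log-Poisson. So in (a) I will only cite these three results.

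For (b), I first derive the moments. From $\mathbb{E}[W^p] = r^{\zeta_p}$ we get $\mathbb{E}[W^p] = \exp(\ln r\,(c_1 p + c_2 p^2))$. Reading off coefficients gives $\mu = c_1 \ln r$ and $\sigma^2/2 = c_2 \ln r$. Since $c_2<0$ and $\ln r<0$, we have $\sigma^2 = 2c_2\ln r > 0$. These are exactly the moments of the log-normal law $e^{\mu+\sigma Z}$ with $Z$ standard normal, so indeterminacy of the moment problem for $W$ reduces to indeterminacy of the log-normal moment problem. Positive affine rescaling and powers preserve this, so I may normalise.

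The main step is to build uncountably many laws with these moments. I will use Heyde's classical perturbation. Let $f$ be the log-normal density of $e^{\mu+\sigma Z}$. I will show that for every integer $n\ge 0$,
\[
  \int_0^\infty x^n f(x)\,\sin\!\Bigl(\tfrac{2\pi(\ln x-\mu)}{\sigma^2}\Bigr)\,dx = 0 .
\]
To see this, substitute $\ln x = \mu + \sigma^2 n + \sigma y$. The factor $x^n f(x)$ becomes a constant times the Gaussian density in $y$, and the sine becomes $\sin(2\pi n + 2\pi y/\sigma) = \sin(2\pi y/\sigma)$. This is an odd function integrated against an even density, so the integral vanishes. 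The quadratic completion behind the substitution is the only real computation, and it is where I expect care to be needed.

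Consequently, for each $\epsilon\in[-1,1]$ the function $f_\epsilon(x) = f(x)\bigl(1+\epsilon\sin(2\pi(\ln x-\mu)/\sigma^2)\bigr)$ is nonnegative. Taking $n=0$ shows it integrates to $1$, and the general $n$ case shows it has the same integer moments as $f$. The densities $f_\epsilon$ are pairwise distinct, since they differ on a set of positive Lebesgue measure. This gives a continuum of laws with the same integer moments.

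Real $p$ needs a separate remark, because the structure functions involve $\mathbb{E}[W^p]$ for real $p \ge 0$. There are two options. The first is to note that the cascade observable along integer lattices, $p\in k\mathbb{N}_0$ as in A1, already suffices for the statement. The second is to replace the period $\sigma^2$ in the sine by a perturbation annihilating all real moments. I would take the first route: the sine argument at integer $n$ is the standard notion of indeterminacy. If all real $p$ are wanted, I would instead invoke the Stieltjes-class construction or a known result on non-uniqueness for real moments.

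For contrast with (a), I will close by noting that Carleman's condition fails for this sequence. The moments are $m_n = e^{\mu n+\sigma^2 n^2/2}$, so $\sum m_{n}^{-1/(2n)} = \sum e^{-\mu/2-\sigma^2 n/4} < \infty$. This is consistent with the unbounded support of $W$, in line with the bounded-versus-unbounded dichotomy of Corollary~\ref{cor:partition}.
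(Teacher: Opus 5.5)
Your proposal is correct and follows the paper's proof. Part (a) is the same citation of Lemma~\ref{lem:determinacy} and Theorem~\ref{thm:characterization}(ii), and part (b) is the same Heyde sine perturbation on integer moments; your completed-square substitution $\ln x = \mu + n\sigma^2 + \sigma y$ with the odd/even cancellation is an equivalent way of evaluating the integral that the paper computes as the imaginary part of $\mathbb{E}[e^{(n\sigma + 2\pi i/\sigma)Z}]$.

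Discard your fallback for real $p$, however, because it would fail. If $\mathbb{E}[W^p] = \exp(\mu p + \sigma^2 p^2/2)$ were prescribed for all real $p$ in an open interval, the moment generating function of $\ln W$ would be fixed on that interval and would therefore determine the law. So no Stieltjes-class or other perturbation can preserve all real moments: indeterminacy is a statement about the integer (lattice) moment sequence, which is the route you chose and all that the paper proves.
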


\begin{proof}
\textit{Part~\textup{(a)}} is contained in
Lemma~\ref{lem:determinacy} and
Theorem~\ref{thm:characterization}(ii).

\textit{Part~\textup{(b)}.}  We exhibit the family (Heyde
\cite{Hey63}).  Let $f$ be the log-normal density with parameters
$(\mu, \sigma^2)$ and, for $|c| \le 1$, define
\[
  f_c(x) \;=\; f(x)\,\Bigl[\,1 + c\,
  \sin\!\Bigl(\tfrac{2\pi(\ln x - \mu)}{\sigma^2}\Bigr)\Bigr],
  \qquad x > 0 .
\]
Substituting $\ln x = \mu + \sigma z$ with $Z$ standard normal, for
every $n \in \mathbb{N}_0$:
\[
\begin{aligned}
  \int_0^\infty x^n f(x)
  \sin\!\Bigl(\tfrac{2\pi(\ln x-\mu)}{\sigma^2}\Bigr)\,dx
  &= e^{n\mu}\,\mathbb{E}\Bigl[e^{n\sigma Z}
    \sin\!\Bigl(\tfrac{2\pi Z}{\sigma}\Bigr)\Bigr]\\
  &= e^{n\mu}\, e^{(n^2\sigma^2 - 4\pi^2/\sigma^2)/2}\,
    \sin(2\pi n) = 0,
\end{aligned}
\]
using $\mathbb{E}[e^{(\alpha+i\beta')Z}] = e^{(\alpha+i\beta')^2/2}$,
whose imaginary part is $e^{(\alpha^2-\beta'^2)/2}\allowbreak\sin(\alpha\beta')$,
with $\alpha = n\sigma$, $\beta' = 2\pi/\sigma$, $\alpha\beta' =
2\pi n$.  The case $n = 0$ shows each $f_c$ is a probability density
(and $f_c \ge 0$ since $|c| \le 1$); the cases $n \ge 1$ show all
$f_c$ share the log-normal moments $\exp(n\mu + n^2\sigma^2/2)$.
Hence uncountably many distinct laws realize the moment sequence.
\end{proof}

\begin{remark}
Indeterminacy cannot be inferred from the convergence of the
Carleman sum: Carleman's condition is sufficient for determinacy but
not necessary, so its failure proves nothing.  The explicit family
above is the classical argument.
\end{remark}

\begin{theorem}[Stability]\label{thm:stability}
Let $\{W_n\}$ be an i.i.d.\ multiplicative cascade with
log-infinitely-divisible generator, all moments finite, and
nontrivial intermittency.  Suppose there exist $\beta \in (0,1)$,
$d^* \in \mathbb{R}$ and $\varepsilon > 0$ such that
\[
  \bigl|\delta_{p+k} - (1-\beta)\,d^* - \beta\,\delta_p\bigr|
  < \varepsilon
  \qquad\text{for all } p \in k\mathbb{N}_0 .
\]
Then $\sigma^2 = 0$, $\supp\nu \subseteq (-\infty,0]$,
$\int(|x|\wedge1)\,d\nu < \infty$, the limit $\delta_\infty :=
\lim_{p\to\infty}\delta_p$ exists finitely, and
$(1-\beta)\,|d^* - \delta_\infty| \le \varepsilon$.  Moreover, with
$u = e^{kx}$, $\eta = (1-u)\,d\tilde\nu$, and
$A = (\delta_0 - \delta_\infty)\ln r$:

\textup{(i)} if $d^* = \delta_\infty$ (the true limit is known),
\[
  W_1\!\left(\frac{\eta}{\|\eta\|},\;\delta_\beta\right)
  \;\leq\;
  \left(\frac{(1+\beta)\,|\ln r|}{|A|}\right)^{\!1/2}
  \!\sqrt{\varepsilon}\,;
\]

\textup{(ii)} in general (fitted $d^*$),
\[
  W_1\!\left(\frac{\eta}{\|\eta\|},\;\delta_\beta\right)
  \;\leq\;
  \left(\frac{2\,|\ln r|}{|A|}\right)^{\!1/2}
  \!\sqrt{\varepsilon}\,.
\]
Both constants are sharp in their respective settings.  In
particular, the cascade multiplier distribution converges to
log-Poisson as $\varepsilon \to 0$, at the sharp rate
$\Theta(\sqrt\varepsilon)$
(Theorems~\ref{thm:propagation} and~\ref{thm:lower}).
\end{theorem}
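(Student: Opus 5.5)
The plan is to run the proof of Theorem~\ref{thm:classification} again, with the equalities replaced by inequalities, and then to quantify the second-moment identity of Step~4.

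\textbf{Structural conclusions.} First I would show that the $\delta_p$ are bounded on the lattice. Write $e_m := \delta_{mk} - d^*$. The hypothesis gives $|e_{m+1}| \le \beta|e_m| + \varepsilon$, so
\[
|e_m| \le |e_0| + \frac{\varepsilon}{1-\beta}
\]
for all $m$. Steps~2, 3 and 3\textonehalf{} of Theorem~\ref{thm:classification} only used the fact that $\delta_p \to -\infty$ is impossible along the lattice. Boundedness excludes that, so those steps apply verbatim. This yields $\sigma^2 = 0$, $\supp\nu \subseteq (-\infty,0]$ and $\int(|x|\wedge 1)\,d\nu < \infty$. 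The dominated-convergence part of Step~3\textonehalf{} then gives $\delta_p \to \delta_\infty = c_0/\ln r$, which is finite. Passing to the limit $m\to\infty$ in the hypothesis gives
\[
|\delta_\infty - (1-\beta)d^* - \beta\delta_\infty| \le \varepsilon,
\]
which is the claim $(1-\beta)|d^*-\delta_\infty| \le \varepsilon$.

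\textbf{Moment residuals.} Next I would make the same substitution $u = e^{kx}$ and set $\eta = (1-u)\,d\tilde\nu$ and $\mu_m = \int u^m\,d\eta$, exactly as in Step~4. That computation gives
\[
\mu_m = |\ln r|\,(\delta_\infty - \delta_{mk})\cdot\mathrm{sgn},
\]
with the sign fixed so that $\mu_0 = |A| = \|\eta\|$; this uses $A<0$. Define the residuals $\varepsilon_m := \delta_{(m+1)k} - (1-\beta)d^* - \beta\delta_{mk}$, so that $|\varepsilon_m| < \varepsilon$, and note $\varepsilon_m \to \varepsilon_\infty := (1-\beta)(\delta_\infty - d^*)$. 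A direct computation then gives
\[
\mu_{m+1} - \beta\mu_m = \pm|\ln r|\,(\varepsilon_m - \varepsilon_\infty).
\]
Expanding the square as in Step~4,
\[
\int(u-\beta)^2\,d\eta = (\mu_2 - \beta\mu_1) - \beta(\mu_1 - \beta\mu_0),
\]
so
\[
\int(u-\beta)^2\,d\eta \le |\ln r|\,\bigl|\varepsilon_1 - \beta\varepsilon_0 - (1-\beta)\varepsilon_\infty\bigr|.
\]
In case~(i) we have $\varepsilon_\infty = 0$, and the bound is $(1+\beta)|\ln r|\varepsilon$. In case~(ii) it is at most $(1+\beta+1-\beta)|\ln r|\varepsilon = 2|\ln r|\varepsilon$. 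Finally, since $\eta/\|\eta\|$ is a probability measure on $[0,1]$ and the target is a Dirac mass,
\[
W_1\!\left(\frac{\eta}{\|\eta\|},\delta_\beta\right) = \frac{1}{|A|}\int|u-\beta|\,d\eta \le \left(\frac{1}{|A|}\int(u-\beta)^2\,d\eta\right)^{1/2}
\]
by Cauchy--Schwarz. This gives both stated constants.

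\textbf{Sharpness (main obstacle).} For each inequality above to be attained at once, I need three things. Cauchy--Schwarz is tight when $|u-\beta|$ is constant $\eta$-a.e., so $\eta$ should sit on two atoms $\beta \pm h$. The residual signs must align, with $\varepsilon_1$ and $\varepsilon_0$ of opposite signs in case~(i), and $\varepsilon_\infty$ of the opposite sign to $\varepsilon_1$ in case~(ii). And all later residuals must still satisfy $|\varepsilon_m|<\varepsilon$. The difficulty is that the residuals for $m\ge 2$ are determined by the two-atom measure, since the $\mu_m$ are explicit in terms of $\beta\pm h$ and the weights. So I would first try symmetric two-atom measures $\eta = \tfrac{|A|}{2}(\delta_{\beta-h}+\delta_{\beta+h})$. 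I would compute $\varepsilon_m$ explicitly in terms of $h$, check that $\sup_m|\varepsilon_m|$ is attained at the indices $m = 0,1$ (respectively at $m=1$ and $m\to\infty$) to leading order as $h\to0$, and then take $\varepsilon := \sup_m|\varepsilon_m|$. If the ratio of $W_1 = h$ to $\sqrt\varepsilon$ tends to the stated constant as $h \to 0$, possibly after adjusting the weights and choosing $d^*$ optimally in case~(ii), then the constants are asymptotically sharp. If exact attainment fails, I would settle for this asymptotic sharpness, which is what I expect the theorem to mean.
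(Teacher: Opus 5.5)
Your derivation of the two bounds is the paper's proof, step for step. It runs: boundedness of $\{\delta_{mk}\}$ feeding Steps~2, 3, 3\textonehalf{} of Theorem~\ref{thm:classification}; the limit in the hypothesis; the variance identity $\int(u-\beta)^2\,d\eta=|\ln r|\,(\epsilon_1-\beta\epsilon_0)$ in the true residuals $\epsilon_m:=(\mu_{m+1}-\beta\mu_m)/|\ln r|$ (your $\varepsilon_m-\varepsilon_\infty$; the sign is not a choice, since $\mu_m=|\ln r|(\delta_{mk}-\delta_\infty)$); and Cauchy--Schwarz with $\|\eta\|=|A|$.

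The gap is in the sharpness plan. The symmetric family $\tfrac{|A|}{2}(\delta_{\beta-h}+\delta_{\beta+h})$ has $\int(u-\beta)\,d\eta=0$, i.e.\ $\epsilon_0=0$. So in reading~(i), $W_1=h=(|\ln r|\,\epsilon_1/|A|)^{1/2}<(|\ln r|\,\varepsilon/|A|)^{1/2}$ for every $h$, and the factor $\sqrt{1+\beta}$ is never recovered. Moreover $\epsilon_2=2\beta\epsilon_1$, so for $\beta>\tfrac12$ the largest residual is not at $m\le1$. This is the family of Theorem~\ref{thm:lower}, which the paper uses only for the rate. Re-weighting the atoms to force $(\epsilon_0,\epsilon_1)\to(-\varepsilon,\varepsilon)$ with $h\to0$ also fails. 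Writing $u^2=\beta^2+2\beta(u-\beta)+(u-\beta)^2$ gives $|\ln r|\,\epsilon_2=\beta^2|\ln r|\,\epsilon_0+2\beta\int(u-\beta)^2\,d\eta+O(h\,|\ln r|\,\varepsilon)$. Hence $\epsilon_2\to\beta(2+\beta)\varepsilon$, which exceeds $\varepsilon$ once $\beta>\sqrt2-1$. The missing device is the paper's Remark~(a): put the compensating mass on an atom $u_0\downarrow0$, which feeds $\mu_0$ and essentially nothing else, and the other atom at $v\in(\beta,1)$. Then $\epsilon_0\to-\varepsilon$, $\epsilon_1\to\varepsilon$, $\epsilon_m\to\varepsilon v^{m-1}$ for $m\ge2$, and $\int(u-\beta)^2\,d\eta\to(1+\beta)|\ln r|\,\varepsilon$. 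Reading~(ii) works the same way with $(\epsilon_0,\epsilon_1)\to(0,2\varepsilon)$ and $t=(1-\beta)(d^*-\delta_\infty)\to\varepsilon$.

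Be aware that this family proves sharpness of the \emph{variance} inequality. For the $W_1$ constants, which you were aiming at, Cauchy--Schwarz must also be tight, i.e.\ $|u-\beta|$ must be constant on $\supp\eta$. In the paper's family this means $v=2\beta$, which is available only for $\beta\le\tfrac12$; in that range the $W_1$ constants are indeed attained in the limit. For $\beta>\tfrac12$ no admissible $\eta$ attains them.
\begin{itemize}
\item Exact equality in reading~(i) forces $\eta=p\,\delta_{\beta-h}+q\,\delta_{\beta+h}$ with $h\le1-\beta$. Solving for $p,q$ gives $\epsilon_2=(2\beta+\beta^2-h^2)\varepsilon\ge(4\beta-1)\varepsilon>\varepsilon$.
\item Exact equality in reading~(ii) forces $p=q$ and $\epsilon_2=4\beta\varepsilon>2\varepsilon$.
\item Approximate equality is excluded by weak compactness of $\eta/\|\eta\|$ when $\|\eta\|/\varepsilon$ stays bounded. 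When it does not, $\eta/\|\eta\|$ concentrates at $\beta$, near-equality in Cauchy--Schwarz kills the cubic term, and the second-order expansion above applies.
\end{itemize}
So no choice of weights, atoms or $d^*$ can complete your sharpness step for $\beta>\tfrac12$. What holds for all $\beta$ is sharpness at the level of $\int(u-\beta)^2\,d\eta$.
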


\begin{proof}
\textit{Step~0 (reduction and existence of the limit).}  From the
hypothesis, $|\delta_{(m+1)k}| \le \beta|\delta_{mk}| +
(1-\beta)|d^*| + \varepsilon$, so the lattice sequence
$\{\delta_{mk}\}$ is bounded:
$\limsup_m |\delta_{mk}| \le |d^*| + \varepsilon/(1-\beta)$.  But by
Steps~2, 3 and~3\textonehalf{} of the proof of
Theorem~\ref{thm:classification}---none of which used the exact
form of A1, only the finiteness of $\liminf$ of
$\{\delta_p\}$---each of the conditions $\sigma^2 > 0$,
$\nu((0,\infty)) > 0$, $\int_{|x|\le1}|x|\,d\nu = \infty$ forces
$\delta_{mk} \to -\infty$, a contradiction.  Hence $\sigma^2 = 0$,
$\supp\nu \subseteq (-\infty,0]$, $\int(|x|\wedge1)\,d\nu < \infty$;
the split form of $\phi$ is valid, dominated convergence gives
$\phi(p) \to c_0$, and $\delta_\infty = c_0/\ln r$ exists finitely.
Letting $m \to \infty$ in the hypothesis,
\[
  \bigl|\delta_\infty - (1-\beta)d^* - \beta\delta_\infty\bigr|
  \le \varepsilon
  \quad\Longrightarrow\quad
  (1-\beta)\,|d^* - \delta_\infty| \le \varepsilon .
\]

\textit{Step~1 (exact moment identities and residuals).}  As in
Step~4 of Theorem~\ref{thm:classification} (now with no
approximation in the identity itself),
\[
  \mu_m := \int_{(0,1)} u^m\,d\eta
  \;=\; \bigl(\delta_{mk} - \delta_\infty\bigr)\,|\ln r|
  \;\ge\; 0
  \qquad\text{for all } m \ge 0,
\]
in particular $\|\eta\| = \mu_0 = (\delta_0 - \delta_\infty)|\ln r| =
|A|$ \emph{exactly}.  Define the signed residuals
\[
  \epsilon_m := \delta_{(m+1)k} - (1-\beta)\,\delta_\infty
  - \beta\,\delta_{mk}
  \;=\; \frac{\mu_{m+1} - \beta\,\mu_m}{|\ln r|},
\]
and let $h_m$ denote the hypothesis residuals (with $d^*$ in place
of $\delta_\infty$), $|h_m| < \varepsilon$.  Setting
$t := (1-\beta)(d^* - \delta_\infty)$, $|t| \le \varepsilon$ by
Step~0, one has $\epsilon_m = h_m + t$.  Under reading~(i), $t = 0$
and $|\epsilon_m| < \varepsilon$.

\textit{Step~2 (variance identity).}  Telescoping,
\[
\begin{aligned}
  \int_{(0,1)} (u-\beta)^2\,d\eta
  &= \mu_2 - 2\beta\mu_1 + \beta^2\mu_0
  = (\mu_2 - \beta\mu_1) - \beta(\mu_1 - \beta\mu_0)\\
  &= |\ln r|\,\bigl(\epsilon_1 - \beta\,\epsilon_0\bigr).
\end{aligned}
\]
Under reading~(i): $|\epsilon_1 - \beta\epsilon_0| < (1+\beta)
\varepsilon$.  Under reading~(ii):
$\epsilon_1 - \beta\epsilon_0 = (h_1 - \beta h_0) + (1-\beta)t$, so
$|\epsilon_1 - \beta\epsilon_0| < (1+\beta)\varepsilon +
(1-\beta)\varepsilon = 2\varepsilon$.  Hence
\[
  0 \;\le\; \int_{(0,1)}(u-\beta)^2\,d\eta
  \;\le\;
  \begin{cases}
    (1+\beta)\,|\ln r|\,\varepsilon & \text{(i)},\\[2pt]
    2\,|\ln r|\,\varepsilon & \text{(ii)}.
  \end{cases}
\]

\textit{Step~3 (Wasserstein bound).}  For a Dirac target the
$W_1$ distance is the first absolute moment:
$W_1(\eta/\|\eta\|, \delta_\beta) = \|\eta\|^{-1}\int|u-\beta|
\,d\eta$.  By Cauchy--Schwarz and $\|\eta\| = |A|$,
\[
\begin{aligned}
  W_1\!\left(\frac{\eta}{\|\eta\|},\,\delta_\beta\right)
  &\;\le\; \Bigl(\frac{1}{|A|}\int (u-\beta)^2\,d\eta\Bigr)^{1/2}\\
  &\;\le\;
  \begin{cases}
    \bigl((1+\beta)\,|\ln r|/|A|\bigr)^{1/2}\sqrt{\varepsilon}
      & \text{(i)},\\[2pt]
    \bigl(2\,|\ln r|/|A|\bigr)^{1/2}\sqrt{\varepsilon}
      & \text{(ii)}.
  \end{cases}
\end{aligned} \qedhere
\]
\end{proof}

\begin{remark}[Sharpness; a warning about per-moment transfer]
\textup{(a)} The constant in reading~(i) is attained in the limit by
the two-atom family $\eta = c_0\delta_{u_0} + c_1\delta_v$ with
$u_0 \downarrow 0$, $v \in (\beta,1)$, $c_1 v(v-\beta) =
\varepsilon|\ln r|$ and $c_0 = (\varepsilon|\ln r|/\beta)(1 + 1/v)$:
then $\epsilon_0 \to -\varepsilon$, $\epsilon_1 = +\varepsilon$,
$|\epsilon_m| = \varepsilon v^{m-1} \le \varepsilon$ for $m \ge 2$,
and $\int(u-\beta)^2 d\eta \to (1+\beta)|\ln r|\varepsilon$.

\textup{(b)} The constant in reading~(ii) is likewise attained:
choose moment residuals $(\epsilon_0, \epsilon_1) \to (0,
2\varepsilon)$ realized by two atoms (one near $0$, one in
$(\beta,1)$) and the offset $t \to \varepsilon$; all hypothesis
residuals $h_m = \epsilon_m - t$ then stay below $\varepsilon$ in
absolute value.  For $\beta < \sqrt2 - 1$ one has $2 >
(1+\beta)^2$: the distinction between the two readings is
quantitatively real, and a constant valid in reading~(i)---even the
non-sharp $(1+\beta)^2$---can \emph{fail} outright in reading~(ii).

\textup{(c)} A tempting route passes through the per-moment estimate
$|\mu_m - |A|\beta^m| \le |\ln r|\,\varepsilon$ for all $m$.  That
estimate is false in general: recurrence errors accumulate to
$|\mu_m - |A|\beta^m| \le |\ln r|\,\varepsilon\,
\tfrac{1-\beta^m}{1-\beta}$, and the bound is attained in the limit
by $\eta = (|A|-c)\delta_\beta + c\,\delta_v$ with $c(v-\beta) =
\varepsilon|\ln r|$, $v \to 1$, for which
$\sup_m |\mu_m - |A|\beta^m| / (|\ln r|\varepsilon) \to
1/(1-\beta)$.  It is also unnecessary: only $\epsilon_0$ and
$\epsilon_1$ enter the variance identity.
\end{remark}

\section{Propagation: the sharp rate}
\label{sec:propagation}

We now transfer the bound of Theorem~\ref{thm:stability} from the
L\'evy measure to the multiplier distribution, at a rate that is
exact: $O(\sqrt\varepsilon)$ with no further hypotheses
(Theorem~\ref{thm:propagation}), and no better
(Theorem~\ref{thm:lower}).

Throughout this section the hypotheses are those of
Theorem~\ref{thm:stability}, reading~(i) (for reading~(ii) replace
$(1+\beta)$ by $2$ in every constant), so that Step~0 there gives
$\sigma^2 = 0$, $\supp\nu \subseteq (-\infty,0]$,
$\int(|x|\wedge1)\,d\nu < \infty$, and Steps~2--3 give, with
$V_\varepsilon := (1+\beta)|\ln r|\,\varepsilon$ and
$S := \bigl((1+\beta)\,|A|\,|\ln r|\bigr)^{1/2}$,
\begin{equation}\label{eq:VF}
  \int_{(0,1)}(u-\beta)^2\,d\eta \;\le\; V_\varepsilon,
  \qquad
  \int_{(0,1)}|u-\beta|\,d\eta \;\le\; S\sqrt{\varepsilon} .
\end{equation}
The L\'evy measure may have \emph{infinite total mass},
accumulating only at $u = 1$ where the tilt $(1-u)$ vanishes; the
multiplier is the a.s.-convergent product over the Poisson point
process $\{U_i\}$ of intensity $\tilde\nu$,
\[
  W = e^{a_\varepsilon}\prod_i U_i^{1/k},
  \quad
  \sum_i\bigl(1 - U_i^{1/k}\bigr) \le \tfrac2k\sum_i(1-U_i)
  \ \text{ of finite mean } \tfrac2k\|\eta\|
\]
(Campbell's formula \cite{Kin93}).  Under $\mathbb{E}[W]=1$ the
drift is $a_\varepsilon = \int(1-u^{1/k})\,d\tilde\nu < \infty$.
The comparison target $W_0$ is the log-Poisson multiplier with jump
factor $\beta^{1/k}$, rate $\lambda = |A|/(1-\beta)$, and drift
$a_0$ fixed by the same normalization.

\begin{theorem}[Unconditional propagation]\label{thm:propagation}
Under the hypotheses of Theorem~\textup{\ref{thm:stability}}
alone---no finite-activity or minimum-jump assumption---there exist
$\varepsilon_0 > 0$ and an explicit constant $K_\infty =
K_\infty(\beta, C, r, k)$ such that for all $\varepsilon \le
\varepsilon_0$,
\[
  W_1\bigl(\mathrm{law}(W),\ \mathrm{law}(W_0)\bigr)
  \;\le\; K_\infty\,\sqrt{\varepsilon}.
\]
One admissible (not optimized) choice is
\[
\begin{aligned}
  K_\infty &= 4\,e^{a_0+1}\Bigl(\frac{L_k}{1-\beta}
  + \frac{1}{(1-\beta)^2}\Bigr)
  \sqrt{(1+\beta)\,|A|\,|\ln r|}\,,\\
  L_k &= \tfrac1k\bigl(\tfrac\beta2\bigr)^{(1-k)/k},
\end{aligned}
\]
absorbing $O(\varepsilon)$ terms via $\varepsilon \le
\sqrt\varepsilon$.
\end{theorem}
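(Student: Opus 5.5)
We will build an explicit coupling of $W$ and $W_0$ on one probability space and bound $\mathbb{E}|W - W_0|$. Both multipliers are of the form $e^{a}\prod_i U_i^{1/k}$ over a Poisson point process on $(0,1)$: for $W$ the intensity is $\tilde\nu$, and for $W_0$ it is $\lambda\delta_\beta$. The plan is to pass through the tilted measure $\eta = (1-u)\,d\tilde\nu$, which is finite with $\|\eta\| = |A| = (1-\beta)\lambda$ exactly (Step~1 of Theorem~\ref{thm:stability}). We then transport $\eta/(1-\beta)$ onto $\lambda\delta_\beta$ in a way that costs each jump by its multiplicative deviation, in the spirit of the Method paragraph.

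\emph{Step 1: coupling the point processes.} Realize the process for $W$ as a marked Poisson process of total intensity $\tilde\nu$, and thin it as follows. Retain a point at $u$ with probability $(1-u)/(1-\beta)\wedge 1$; otherwise it is an \emph{unmatched} point. This is legitimate only for $u \ge \beta$, so split at $\beta$: on $[\beta,1)$ the factor $(1-u)/(1-\beta) \le 1$, while on $(0,\beta)$ we have $\tilde\nu \le \eta/(1-\beta)$ and all of its mass is finite. Each retained point at $u$ is moved to $\beta$. We then add independent extra points at $\beta$, or delete retained ones, to correct the rate deficit, so that the moved process has exactly intensity $\lambda\delta_\beta$ and yields $W_0$ (up to the drift).

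The cost to control is threefold:
\begin{itemize}
\item the displacement of moved points, $\sum |u^{1/k} - \beta^{1/k}|$, with mean at most $L_k\int |u-\beta|\,d\eta/(1-\beta)$ by Campbell's formula. The Lipschitz constant $L_k$ of $u \mapsto u^{1/k}$ on $[\beta/2,1]$ is the one in the statement; points below $\beta/2$ are handled by $|u^{1/k}-\beta^{1/k}| \le 1 \le 2|u-\beta|/\beta$, which is absorbed.
\item the unmatched points, each costing $1 - u^{1/k} \le (1-u)$.
\item the rate correction.
\end{itemize}
For the last two we use that the total mass discrepancy $\bigl|\int \frac{1-u}{1-\beta}\,d\tilde\nu\cdot\ldots\bigr|$ is controlled by $\int|u-\beta|\,d\eta/(1-\beta)^2$. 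The key identity is $1/(1-u) - 1/(1-\beta) = (u-\beta)/((1-u)(1-\beta))$, weighted against $\eta$. This is where the $(1-\beta)^{-2}$ term arises.

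\emph{Step 2: from point processes to multipliers.} With the coupled products $P = \prod U_i^{1/k}$ and $P_0 = \prod \beta^{1/k\cdot N}$, all factors lie in $[0,1]$, so $|P - P_0| \le \sum_i |\text{factor}_i - \text{factor}_i'|$, counting unmatched points as against $1$. Hence $\mathbb{E}|P-P_0|$ is bounded by the Step~1 costs, which are $\le c\,S\sqrt\varepsilon$ by \eqref{eq:VF}. Next we compare the drifts. We have $a_\varepsilon - a_0 = \int(1-u^{1/k})\,d\tilde\nu - \lambda(1-\beta^{1/k})$, which is bounded by the same transport quantity, hence $O(\sqrt\varepsilon)$. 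Then $|e^{a_\varepsilon}-e^{a_0}| \le e^{a_0+1}|a_\varepsilon-a_0|$ once $\varepsilon \le \varepsilon_0$ makes $|a_\varepsilon - a_0| \le 1$; this is the origin of $e^{a_0+1}$ and of $\varepsilon_0$. Finally,
\[
\mathbb{E}|W-W_0| \le e^{a_\varepsilon}\,\mathbb{E}|P-P_0| + |e^{a_\varepsilon}-e^{a_0}|,
\]
using $P_0 \le 1$, gives the bound. $W_1$ is dominated by the coupling cost, and the leftover $O(\varepsilon)$ terms are absorbed via $\varepsilon \le \sqrt\varepsilon$.

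\emph{Main obstacle.} The hard part is Step~1 near $u=1$, where $\tilde\nu$ may have infinite mass. There the thinning keeps a vanishing fraction, and the infinitely many unmatched points must be costed by $(1-u)$, i.e.\ by $\eta$-mass near $1$. Since $|u-\beta| \ge (1-\beta)/2$ near $1$, that $\eta$-mass is bounded by $2\int|u-\beta|\,d\eta/(1-\beta)$. I would first verify carefully that every cost term reduces to $\int|u-\beta|\,d\eta$ (never to $\tilde\nu$-mass). If the thinning construction proves awkward, I would instead use the fallback: condition on a Poisson number of $\eta$-points and couple them directly, in which case the factor $4$ would absorb the bookkeeping.
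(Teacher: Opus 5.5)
Your proposal is correct, and your coupling differs from the paper's.

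\textbf{The paper's coupling.} The paper splits at the fixed height $u_0=(1+\beta)/2$. Jumps in $(u_0,1)$ are left unpaired; Chebyshev against the variance bound $V_\varepsilon$ shows they carry only $O(\varepsilon)$ of $\eta$-mass. On $(0,u_0]$ the process has finite activity, with de-tilting factor $1/(1-u)\le 2/(1-\beta)$. Its count is coupled to that of $W_0$ through a shared $\mathrm{Poisson}(\lambda\wedge\lambda_{\mathrm{mac}})$ count, with jumps paired to $\beta^{1/k}$ and costed by Wald's identity.

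\textbf{Your coupling.} You thin pointwise with probability $\frac{1-u}{1-\beta}\wedge 1$, which couples $\tilde\nu$ with $\eta/(1-\beta)$ through their common part. The unmatched points have intensity $\frac{u-\beta}{1-\beta}\,d\tilde\nu$ on $[\beta,1)$ and each costs at most $1-u$. They therefore contribute at most $\frac{1}{1-\beta}\int(u-\beta)_+\,d\eta$ directly. No split height or separate ``near~1'' estimate is needed, and their possibly infinite number never enters. Because $\tilde\nu<\eta/(1-\beta)$ on $(0,\beta)$, the rate correction is one-sided: you only ever add points at $\beta$, never delete.

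\textbf{What each route buys.} The paper's route isolates the infinite-activity region as a second-moment, $O(\varepsilon)$ contribution. Yours expresses every cost through the first absolute moment $\int|u-\beta|\,d\eta$ and never pays the factor $2/(1-\beta)$. Your leading coefficient is therefore half the paper's. That leaves real room to absorb the $O(\varepsilon)$ remainders into the displayed $K_\infty$ once $\varepsilon_0$ is small; the paper's displayed constant is exactly its own leading coefficient.

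\textbf{One bookkeeping correction.} You bound the region $u<\beta/2$ via $1\le 2|u-\beta|/\beta$. This adds an $O(\sqrt\varepsilon)$ term with coefficient $2/(\beta(1-\beta))$, which the displayed $K_\infty$ does not dominate for small $\beta$, so it is not ``absorbed.'' Use Chebyshev against $V_\varepsilon$ there instead, $\eta((0,\beta/2))\le 4V_\varepsilon/\beta^2$, as in the paper's Step~3. That region then costs only $O(\varepsilon)$, the stated constant follows with room to spare, and the fallback you mention is unnecessary.
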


\begin{proof}
Fix the $\varepsilon$-independent split height
\[
  h_0 := \tfrac{1-\beta}{2},
  \qquad
  u_0 := 1 - h_0 = \tfrac{1+\beta}{2},
\]
and call a jump \emph{small} if $u \in (u_0, 1)$, \emph{macroscopic}
if $u \in (0, u_0]$.

\textit{Step~1 (small jumps: individually cheap, collectively
$O(\varepsilon)$).}  Every $u \in (u_0,1)$ lies at distance
$> h_0$ from $\beta$, so by Chebyshev's inequality
against~\eqref{eq:VF},
\[
  \eta\bigl((u_0,1)\bigr) \;\le\; \frac{V_\varepsilon}{h_0^{\,2}}
  \;=\; \frac{4(1+\beta)|\ln r|}{(1-\beta)^2}\,\varepsilon .
\]
Leave the small-jump points of $W$ \emph{unpaired} and cost them
multiplicatively: for factors in $(0,1]$, telescoping gives
$|\prod_i s_i - 1| \le \sum_i(1-s_i)$ (valid for infinite products
by monotone limits), and $1 - u^{1/k} \le \tfrac2k(1-u)$ for $u \ge
\tfrac12$ (derivative bound; $u_0 \ge \tfrac12$).  By Campbell's
formula,
\[
\begin{aligned}
  \mathbb{E}\Bigl|\prod_{\text{small}} U_i^{1/k} - 1\Bigr|
  \;&\le\; \frac2k\int_{(u_0,1)}(1-u)\,d\tilde\nu
  \;=\; \frac2k\,\eta\bigl((u_0,1)\bigr)\\
  \;&\le\; \frac{8(1+\beta)|\ln r|}{k(1-\beta)^2}\,\varepsilon .
\end{aligned}
\]
\emph{This is the step that controls infinite activity: a near-1
jump's cost is its multiplicative deviation $\asymp(1-u)$---already
$\eta$-weighted---not the count~$1$.}

\textit{Step~2 (the macroscopic part is automatically
finite-activity, with a fixed de-tilting constant).}  On $(0,u_0]$:
$\tfrac1{1-u} \le \tfrac1{h_0} = \tfrac2{1-\beta}$, so
$\lambda_{\mathrm{mac}} := \tilde\nu((0,u_0]) \le
\tfrac{2|A|}{1-\beta} < \infty$.  Comparing rates against
$\lambda = |A|/(1-\beta) = \int\tfrac{d\eta}{1-\beta}$, and using
$\bigl|\tfrac1{1-u} - \tfrac1{1-\beta}\bigr| =
\tfrac{|u-\beta|}{(1-u)(1-\beta)} \le
\tfrac{2|u-\beta|}{(1-\beta)^2}$ on $(0,u_0]$,
\[
  \bigl|\lambda_{\mathrm{mac}} - \lambda\bigr|
  \;\le\; \frac{2}{(1-\beta)^2}\,S\sqrt{\varepsilon}
  \;+\; \frac{\eta((u_0,1))}{1-\beta},
\]
the second term being $O(\varepsilon)$ by Step~1.

\textit{Step~3 (macroscopic jump cost).}  The map $u \mapsto
u^{1/k}$ is $L_k$-Lipschitz on $[\beta/2,1]$ and bounded by~1
below $\beta/2$; on $(0,\beta/2)$, $\tfrac1{1-u} \le
\tfrac1{1-\beta/2} \le 2$, so
$\tilde\nu((0,\beta/2)) \le 2\eta((0,\beta/2)) \le
2V_\varepsilon(\beta/2)^{-2}$ by Chebyshev.  Hence
\[
  \int_{(0,u_0]}\bigl|u^{1/k} - \beta^{1/k}\bigr|\,d\tilde\nu
  \;\le\; \frac{2L_k}{1-\beta}\,S\sqrt{\varepsilon}
  \;+\; \frac{8\,V_\varepsilon}{\beta^2}.
\]

\textit{Step~4 (assembly).}  Couple the macroscopic Poisson count
with $W_0$'s count by thinning (shared count
$\mathrm{Poisson}(\lambda\wedge\lambda_{\mathrm{mac}})$, excess
independent; \cite{BHJ92}, Theorem~10.A), shared jump pairs
optimally in the multiplier coordinate against the constant target
$\beta^{1/k}$, and leave the small jumps unpaired.  Writing $W =
e^{a_\varepsilon}P_{\mathrm{mac}}P_{\mathrm{small}}$ and $W_0 =
e^{a_0}P_0$ with all products in $[0,1]$, the telescoping inequality
and Wald's identity give
\[
\begin{aligned}
  \mathbb{E}\bigl|P_{\mathrm{mac}}P_{\mathrm{small}} - P_0\bigr|
  \;\le\;
  &\underbrace{\int_{(0,u_0]}\!\bigl|u^{1/k}-\beta^{1/k}\bigr|
  d\tilde\nu}_{\text{Step 3}}
  \;+\;
  \underbrace{\bigl|\lambda_{\mathrm{mac}} -
  \lambda\bigr|}_{\text{Step 2}}\\
  &+\;
  \underbrace{\mathbb{E}\bigl|P_{\mathrm{small}} -
  1\bigr|}_{\text{Step 1}},
\end{aligned}
\]
each excess jump on either side changing a product by at most~1.
The drift difference obeys the same bound: under $\mathbb{E}[W]=1$,
$a_\varepsilon - a_0 = \int(1-u^{1/k})\,d\tilde\nu -
\lambda(1-\beta^{1/k})$ decomposes into the same three pieces.  Both
multipliers are bounded by $e^{a_\cdot}$
(Lemma~\ref{lem:determinacy}(i)), so for $\varepsilon \le
\varepsilon_0$ with $|a_\varepsilon - a_0| \le 1$,
\[
\begin{aligned}
  W_1 \le \mathbb{E}|W - W_0|
  &\le e^{a_0+1}\Bigl(\mathbb{E}|P_{\mathrm{mac}}P_{\mathrm{small}}
  - P_0| + |a_\varepsilon - a_0|\Bigr)\\
  &\le 2e^{a_0+1}\bigl[\text{Step 1} + \text{Step 2} +
  \text{Step 3}\bigr].
\end{aligned}
\]
Substituting the three displays and absorbing every
$O(\varepsilon)$ term via $\varepsilon \le \sqrt\varepsilon$ yields
the stated $K_\infty$.
\end{proof}

\begin{remark}[Why no hypotheses are needed]
A coupling built in log-space would require finite activity and a
minimum jump size---a Poisson count must be finite, and a de-tilting
factor must be bounded---and an $\varepsilon$-dependent shell
decomposition for the small jumps would surrender a logarithmic
factor, $O(\sqrt\varepsilon\log(1/\varepsilon))$.  In the
multiplicative coupling both holes close themselves: infinite
activity can only accumulate at $u = 1$, where multiplicative cost
vanishes at exactly the rate $\eta$ measures, and the de-tilting
factor appears only on $(0,u_0]$, where it is the fixed constant
$2/(1-\beta)$.  The flatness of the exponential map at $-\infty$,
fatal to couplings of jump distributions in $x$-space, is the
resource here.
\end{remark}

\begin{theorem}[Lower bound: the rate $\sqrt\varepsilon$ is
exact]\label{thm:lower}
Fix $(\beta, C, r)$, $k = 1$, and conservation $\mathbb{E}[W]=1$.
For $d \in (0, d_0]$, $d_0 = d_0(\beta)$ small, let $W_{(d)}$ be the
compound-Poisson multiplier with tilted measure
\[
  \eta_d = \tfrac{|A|}{2}\bigl(\delta_{\beta-d} +
  \delta_{\beta+d}\bigr)
\]
(jump atoms $\beta\pm d$ with L\'evy masses
$\tfrac{|A|/2}{1-(\beta\pm d)}$, drift by conservation).  Then:

\textup{(i)} its A1 residual satisfies
$\dfrac{|A|}{|\ln r|}\,d^2 \;\le\; \varepsilon(d) \;\le\;
c_2(\beta)\,\dfrac{|A|}{|\ln r|}\,d^2$, where $c_2(\beta) =
\bigl[e\min_{\rho\in[\beta,(1+\beta)/2]}\rho|\ln\rho|\bigr]^{-1}
\vee 1$;

\textup{(ii)} with $\lambda_d = \|\tilde\nu_d\| =
\lambda/(1 - d^2(1-\beta)^{-2})$ and $a = |A|$,
\[
  W_1\bigl(\mathrm{law}\,W_{(d)},\ \mathrm{law}\,W_0\bigr)
  \;\ge\; \lambda_d\,e^{-\lambda_d}\,e^{a}\,d
  \;\ge\; c_1(\beta, C, r)\,\sqrt{\varepsilon(d)}\,.
\]
Hence no propagation bound of order $o(\sqrt\varepsilon)$ is
possible: combined with Theorem~\ref{thm:propagation}, the exact
rate is $\Theta(\sqrt\varepsilon)$.
\end{theorem}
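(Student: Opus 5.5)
The plan has three parts: an exact computation of the residuals for part~(i), a coupling-free lower bound through a single Lipschitz test function for part~(ii), and then the conversion $d\asymp\sqrt{\varepsilon(d)}$.

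\emph{Residuals.} With $k=1$ the exact moment identity of Step~1 in the proof of Theorem~\ref{thm:stability} reads $\mu_m=\int u^m\,d\eta_d=\tfrac{|A|}{2}\bigl((\beta-d)^m+(\beta+d)^m\bigr)$. Taking $d^*=\delta_\infty$, the signed residuals are
\[
|\ln r|\,\epsilon_m=\mu_{m+1}-\beta\mu_m=\tfrac{|A|}{2}\bigl(-d(\beta-d)^m+d(\beta+d)^m\bigr)=\tfrac{|A|d}{2}\bigl((\beta+d)^m-(\beta-d)^m\bigr).
\]
This vanishes at $m=0$ and equals $|A|d^2$ at $m=1$, which gives the lower bound $\varepsilon(d)\ge |A|d^2/|\ln r|$. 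For the upper bound, the mean value theorem gives $(\beta+d)^m-(\beta-d)^m\le 2d\,m\rho^{m-1}$ with $\rho=\beta+d\le(1+\beta)/2$ for $d_0\le(1-\beta)/2$. Then $\sup_m m\rho^{m-1}\le 1\vee\bigl(e\rho|\ln\rho|\bigr)^{-1}$, since $\max_{x>0}x\rho^x=1/(e|\ln\rho|)$, and the case $m=1$ contributes the $\vee1$. Minimizing over $\rho\in[\beta,(1+\beta)/2]$ produces $c_2(\beta)$. I expect this part to be routine; the only care needed is the $\vee1$ and the range of $\rho$.

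\emph{Wasserstein lower bound.} I will avoid couplings entirely and test against a single $1$-Lipschitz function, or use the dual form of $W_1$ on a suitable event. Both laws are explicit compound Poisson, so the key is a distinguishing feature that is linear in $d$. A first-moment comparison fails: conservation fixes both means at $1$, and by the symmetry of $\eta_d$ the means of the jump factors differ only at second order. The distinguishing feature I will use instead is the support near the top atom. $W_0$ has atoms at $e^{a_0}\beta^j$, whereas on the event $\{N=1\}$, which has probability $\lambda_de^{-\lambda_d}$, $W_{(d)}$ puts mass at $e^{a_d}(\beta\pm d)$, a distance of order $e^{a}d$ from the nearest atom of $W_0$. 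Concretely, I take $f(w)=\dist(w,\supp W_0)$, which is $1$-Lipschitz with $\mathbb E f(W_0)=0$. Then
\[
W_1\ge\mathbb E f(W_{(d)})\ge \mathbb P(N=1)\,\min_\pm\dist\bigl(e^{a_d}(\beta\pm d),\,e^{a_0}\beta^{\mathbb N_0}\bigr).
\]

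\emph{Main obstacle: the drift shift.} The difficulty is the drift difference $a_d-a_0$, which is $O(d^2)$ by the symmetric expansion used in Step~4 of the proof of Theorem~\ref{thm:propagation}. Because it is second order, the two atoms $e^{a_d}(\beta\pm d)$ sit at distance at least $e^{a_0}d(1-O(d))$ from $e^{a_0}\beta$. For small $d_0$ they are also separated from the other atoms $e^{a_0}\beta^j$ by a gap depending only on $\beta$. The precise normalization constant $e^{a}$ is to be matched against the paper's, and I would absorb the $1-O(d)$ factor into the choice of $d_0$. Finally, part~(i) gives $d\ge\bigl(|\ln r|\varepsilon/(c_2|A|)\bigr)^{1/2}$. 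Since $\lambda_d\to\lambda$ and $\lambda_de^{-\lambda_d}$ is bounded below for $d\le d_0$, this yields $c_1=\lambda e^{-2\lambda}e^{a}\sqrt{|\ln r|/(c_2|A|)}$ or similar. Combining with Theorem~\ref{thm:propagation} gives the rate $\Theta(\sqrt\varepsilon)$.
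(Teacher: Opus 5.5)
Your proposal is correct and follows the paper's proof essentially step for step. Part (i) uses the same exact residual formula $|\ln r|\,\epsilon_m=\tfrac{|A|d}{2}\bigl((\beta+d)^m-(\beta-d)^m\bigr)$ with the mean-value bound, and part (ii) uses the same distance-to-$\{e^{a}\beta^j\}$ test function evaluated on the one-jump event of probability $\lambda_d e^{-\lambda_d}$.

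You do overestimate your ``main obstacle''. For $k=1$, conservation gives $a_{(d)}=\int(1-u)\,d\tilde\nu_d=\|\eta_d\|=|A|=\lambda(1-\beta)=a_0$ exactly, so the family is drift-rigid. The atoms $e^{a}(\beta\pm d)$ then sit at distance exactly $e^{a}d$ from the support of $W_0$ once $d_0\le\beta(1-\beta)/2$, and the first inequality in (ii) holds as stated. There is no $(1-O(d))$ factor, which is fortunate, because choosing $d_0$ alone could not have removed it.
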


\begin{proof}
\textit{(i)}  The moments are $\mu_m =
\tfrac{|A|}{2}[(\beta-d)^m + (\beta+d)^m]$, so the signed residuals
are
\[
  \epsilon_m = \frac{\mu_{m+1} - \beta\mu_m}{|\ln r|}
  = \frac{|A|\,d}{2\,|\ln r|}
  \bigl[(\beta+d)^m - (\beta-d)^m\bigr] \;\ge 0,
\]
with $\epsilon_0 = 0$ and $\epsilon_1 = |A|d^2/|\ln r|$ exactly,
giving the lower bound on $\varepsilon(d) = \sup_m \epsilon_m$.  For
the upper bound, the mean value theorem gives $(\beta+d)^m -
(\beta-d)^m \le 2dm(\beta+d)^{m-1}$, and $\sup_{x\ge0} x\rho^{x-1} =
(e\rho|\ln\rho|)^{-1}$ for $\rho = \beta+d \le (1+\beta)/2$.

\textit{(ii)}  For $k=1$ the conservation drift is
$a_{(d)} = \int(1-u)\,d\tilde\nu_d = \|\eta_d\| = |A|$, identical to
$a_0 = \lambda(1-\beta) = |A|$: the family is \emph{drift-rigid}.
The support of $\mathrm{law}(W_0)$ is the geometric set $G =
\{e^{a}\beta^j : j \ge 0\}$.  Take the 1-Lipschitz test function
\[
  f(w) := \min\Bigl(\dist\bigl(w,\,G\bigr),\
  \tfrac{e^a\beta(1-\beta)}{2}\Bigr) \;\ge\; 0,
\]
which vanishes on $G$, so $\mathbb{E}f(W_0) = 0$.  On the one-jump
event of $W_{(d)}$ (probability $\lambda_d e^{-\lambda_d}$),
$W_{(d)} = e^{a}(\beta\pm d)$, whose distance to the nearest point
of $G$ is exactly $e^a d$ for $d < \beta(1-\beta)/2$ (the
neighbors $e^a$ and $e^a\beta^2$ are farther, and $e^ad$ is below
the cap); all other events contribute $\ge 0$.  By
Kantorovich--Rubinstein duality \cite{Vil09},
\[
  W_1 \;\ge\; \mathbb{E}f(W_{(d)}) - \mathbb{E}f(W_0)
  \;\ge\; \lambda_d e^{-\lambda_d}\,e^{a}\,d,
\]
and substituting $d \ge \bigl(|\ln r|\,\varepsilon(d) / (c_2(\beta)
|A|)\bigr)^{1/2}$ from~(i) gives the $c_1\sqrt{\varepsilon}$ form.
(The same construction works for $k > 1$ on the lattice
$\{e^{a}\beta^{j/k}\}$; $k = 1$ is stated for cleanliness.)
\end{proof}

\begin{remark}[The stability theory is elementary and now
complete]
The change of variables $u = e^{kx}$ maps the L\'evy measure to the
compact interval $[0,1]$, where a second-moment test against the
candidate atom decides everything: the classification is the
$\varepsilon = 0$ case of the variance identity in
Theorem~\ref{thm:stability}, Step~2; the stability constant is read
off two residuals and is sharp; and the propagation to the
multiplier law is a telescoped multiplicative coupling with no tail
estimates (boundedness, Lemma~\ref{lem:determinacy}(i)), sharp in
rate by Theorem~\ref{thm:lower}.  Every quantitative statement in
the package---variance constant $(1+\beta)$, Wasserstein constant,
propagation rate $\Theta(\sqrt\varepsilon)$---is attained by an
explicit family.  In particular the log-Poisson class is an open
set, with exactly computed modulus, in the space of cascade
multiplier distributions metrized by A1 residuals.
\end{remark}

\section{Beyond independence: stationary and Markov multipliers}
\label{sec:beyond}

The i.i.d.\ assumption enters the preceding sections through the
identity $\mathbb{E}[(W_1\cdots W_n)^p] = (\mathbb{E}[W^p])^n$,
which converts scaling data into one-step moment data.  This section
determines exactly what survives without it.  Let $(W_n)$ be
\emph{stationary ergodic}, $X_n = \ln W_n$, $S_n = X_1 + \cdots +
X_n$, and define exponents asymptotically:

\smallskip
\noindent\textbf{Standing assumptions (S).}  For each lattice $p$:
$m_n(p) := \mathbb{E}[e^{pS_n}] < \infty$ for all $n$, and
\[
  \Lambda(p) := \lim_{n\to\infty}\tfrac1n\ln m_n(p)
  \quad\text{exists and is finite};
  \qquad \zeta_p := \Lambda(p)/\ln r .
\]
A1 is imposed on $\delta_p = \zeta_{p+k}-\zeta_p$ as before.  Write
$\Lambda_{\mathrm{LP}}(p) = ap + \lambda(e^{bp}-1)$ for the
log-Poisson limiting cumulant function with the parameter dictionary
of Theorem~\ref{thm:characterization}.

\begin{theorem}[Asymptotic-statistics classification]\label{thm:cgflevel}
Under \textup{(S)}, A1 holds on the lattice if and only if
$\Lambda = \Lambda_{\mathrm{LP}}$ on the lattice.  Consequently,
under A1 every observable computed from lattice exponents---%
structure-function exponents, moment ratios, and (under the
regularity below) the large-deviation rate function and multifractal
spectrum---coincides exactly with that of the i.i.d.\ log-Poisson
cascade with parameters $(\beta, \gamma, C)$.
\end{theorem}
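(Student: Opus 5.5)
The proof is purely algebraic once everything is expressed through $\Lambda$, since under (S) the exponents are defined as $\zeta_p = \Lambda(p)/\ln r$. The plan is to rerun Lemma~\ref{lem:exponent} and Proposition~\ref{prop:converse} with $\Lambda$ in place of the one-step cumulant generating function $\psi$. Neither argument uses independence; both use only the values $\zeta_{mk}$ and $\zeta_0 = 0$.

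For the \emph{forward direction}, assume A1 holds on the lattice. First check that $\Lambda(0) = \lim_n \tfrac1n \ln m_n(0) = 0$, since $m_n(0) = 1$; hence $\zeta_0 = 0$. Lemma~\ref{lem:exponent} is purely algebraic, so it applies verbatim and gives $\zeta_{mk} = \gamma mk + C(1-\beta^m)$. Multiplying by $\ln r$ and using the dictionary $a = \gamma\ln r$, $b = (\ln\beta)/k$, $\lambda = -C\ln r$ from Theorem~\ref{thm:characterization}, one has $e^{bmk} = \beta^m$ and therefore
\[
  \Lambda(mk) = a\,mk + \lambda\bigl(e^{bmk}-1\bigr) = \Lambda_{\mathrm{LP}}(mk).
\]
For the \emph{reverse direction}, assume $\Lambda = \Lambda_{\mathrm{LP}}$ on the lattice. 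The computation in the proof of Proposition~\ref{prop:converse} uses only the functional form of $p \mapsto ap + \lambda(e^{bp}-1)$ at lattice points, not the law of $W$. It therefore yields $\delta_{p+k} - \delta_\infty = \beta(\delta_p - \delta_\infty)$ with $\beta = e^{bk}$, which is A1. One should note that the parameters $(\beta,\gamma,C)$ are matched consistently in both directions, with $C > 0$ corresponding to $\lambda > 0$. The degenerate case $C = 0$ is handled by the edge-case remark.

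For the consequences, structure-function exponents and moment ratios are by definition functions of the lattice values of $\Lambda$, so they coincide immediately. For the large-deviation rate function I would invoke Gärtner--Ellis. This requires $\Lambda$ to exist as an extended-real limit for all real $p$ in a neighbourhood of the relevant range, to be differentiable, and to be essentially smooth. This is the ``regularity below'' that the statement alludes to, and I would state it as an explicit hypothesis: $\Lambda$ exists for all real $p$ and coincides with $\Lambda_{\mathrm{LP}}$ off the lattice as well. The rate function is then the Legendre transform $I = \Lambda_{\mathrm{LP}}^*$. The multifractal spectrum $f(h) = \inf_p[ph - \zeta_p + d]$ is the same Legendre transform, rescaled, so it agrees with Theorem~\ref{thm:characterization}(iii).

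The \textbf{main obstacle} is this passage from lattice data to the full function $\Lambda$ on $\mathbb{R}$, which the Legendre transform needs. A1 pins down $\Lambda$ only on $k\mathbb{N}_0$. Convexity of $\Lambda$ (it is a limit of convex functions) constrains, but does not determine, the values between lattice points. So the honest route is to make analytic continuation or equality off the lattice part of the regularity assumption, or to restrict the large-deviation and spectrum claims to the lattice-supported Legendre transform. I would try the second option first and see whether it suffices.
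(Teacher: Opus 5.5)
Your proposal is correct and follows the paper's own proof. Lemma~\ref{lem:exponent} is purely algebraic, and $\zeta_0=\Lambda(0)/\ln r=0$ holds automatically under (S), so it gives the forward direction; the computation of Proposition~\ref{prop:converse} gives the converse. Your caution that A1 fixes $\Lambda$ only on $k\mathbb{N}_0$, so that the large-deviation and spectrum claims need an extra off-lattice hypothesis, matches the paper's follow-up remark (G\"artner--Ellis under differentiability, with convexity only bracketing $\Lambda$ between lattice points), and you are, if anything, more explicit about what is being assumed.
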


\begin{proof}
Lemma~\ref{lem:exponent} is purely algebraic, so A1 gives
$\Lambda(km) = a\,km + \lambda(e^{b\,km}-1)$ for all $m$; the
converse is the computation of Proposition~\ref{prop:converse}.
\end{proof}

\begin{remark}
If $\Lambda$ exists, is finite and differentiable on a neighborhood
of $[0,\infty)$, the G\"artner--Ellis theorem \cite{DZ98} yields a
large-deviation principle for $S_n/n$ with rate $I(h) =
\sup_p[ph - \Lambda(p)]$ on the exposed range---the Legendre
structure of Theorem~\ref{thm:characterization}(iii).  Off-lattice,
$\Lambda$ is pinned between consecutive lattice values by convexity.
\end{remark}

Theorem~\ref{thm:cgflevel} is deliberately easy; the substantive
question is whether A1 still determines the multiplier \emph{law}.
It does not:

\begin{theorem}[Interleaved cascade: the law is not
determined]\label{thm:interleaved}
There exists a stationary ergodic multiplier sequence
$(W_n)$---realizable as a function of a stationary, irreducible,
positive-recurrent countable-state Markov chain---such that:

\textup{(i)} $m_n(p) = r^{n\zeta^{\mathrm{LP}}_p}$ \emph{exactly} for
every even $n$ and every real $p \ge 0$; hence \textup{(S)} holds,
$\Lambda = \Lambda_{\mathrm{LP}}$ on all of $[0,\infty)$, and A1
holds exactly, in its strongest (real-$p$) form;

\textup{(ii)} the one-step marginal of $\ln W_1$ is \emph{not}
log-Poisson;

\textup{(iii)} the sequence is not i.i.d., and is not equal in law
to any i.i.d.\ cascade.

Consequently the law-level conclusion of
Theorem~\ref{thm:characterization} does not extend beyond
independence, by any proof.
\end{theorem}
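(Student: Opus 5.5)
The plan is to build the sequence from independent two-step blocks whose total log-increment is exactly the two-step log-Poisson increment, but with all the randomness placed in one slot of each block. Use the parameters $(a,b,\lambda)$ of Theorem~\ref{thm:characterization}. Let $(N_j)_{j\ge1}$ be i.i.d.\ $\mathrm{Poisson}(2\lambda)$. Set
\[
  X_{2j-1} := 2a + bN_j, \qquad X_{2j} := 0 ,
\]
so each block has $X_{2j-1}+X_{2j} \overset{d}{=} 2a + b\,\mathrm{Poisson}(2\lambda)$. This is the law of the sum of two independent log-Poisson generators. The sequence is $2$-periodic in law. To make it stationary, apply a uniform random phase: with probability $1/2$ shift the index by one.

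This is a function of a countable-state Markov chain. Take the state space $\{(\mathrm{o},j): j\in\mathbb{N}_0\}\cup\{\mathrm{e}\}$. From $(\mathrm{o},j)$ the chain moves to $\mathrm{e}$, and from $\mathrm{e}$ it moves to $(\mathrm{o},j')$ with Poisson$(2\lambda)$ weights. Set $X=2a+bj$ on $(\mathrm{o},j)$ and $X=0$ on $\mathrm{e}$. The chain is irreducible and positive recurrent, since $\mathrm{e}$ has return time $2$. Its stationary law puts mass $1/2$ on $\mathrm{e}$ and mass $\tfrac12\,\mathrm{Poi}(2\lambda)(j)$ on $(\mathrm{o},j)$. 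A stationary irreducible positive-recurrent chain is ergodic (periodicity only obstructs mixing, not ergodicity), hence so is $(W_n)$.

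\emph{Step (i).} Fix an even $n$ and condition on the phase. If the window $1,\dots,n$ is aligned with the blocks, $S_n$ is a sum of $n/2$ independent block sums. If it is misaligned, the window consists of one lone $X=0$ slot, then $n/2-1$ full blocks, then one lone odd slot. That lone odd slot carries exactly a full block sum, since its partner is the constant $0$. In both cases
\[
  S_n \overset{d}{=} na + b\,\mathrm{Poisson}(n\lambda),
\]
so for every real $p\ge0$
\[
  m_n(p) = \exp\bigl(n[ap+\lambda(e^{bp}-1)]\bigr) = r^{n\zeta^{\mathrm{LP}}_p}.
\]
For odd $n$, $m_n(p)$ is this expression times a factor bounded above and below uniformly in $n$, so $\Lambda=\Lambda_{\mathrm{LP}}$ on $[0,\infty)$. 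Proposition~\ref{prop:converse}, applied to the exponents $\zeta_p=\Lambda(p)/\ln r$, then gives A1 at all real $p$.

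\emph{Step (ii).} The marginal of $X_1$ is $\tfrac12\delta_0+\tfrac12\,\mathrm{law}(2a+bN)$. A log-Poisson law $a'+b'N'$ has support equal to a full arithmetic progression $a'+b'\mathbb{N}_0$, with weight ratios $P(a'+b'(i+1))/P(a'+b'i)=\lambda'/(i+1)$. I expect this step to be the main, if modest, obstacle, because it depends on how $2a$ sits relative to $b\mathbb{Z}$.
\begin{itemize}
\item If $2a\notin b\mathbb{Z}$, the support $\{0\}\cup(2a+b\mathbb{N}_0)$ is not an arithmetic progression, and the marginal is not log-Poisson.
\item If $2a\in b\mathbb{Z}$, compare the atom weights directly against the ratio law $\lambda'/(i+1)$. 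As a fallback, compare cumulants: a log-Poisson law has $\kappa_{j+1}/\kappa_j = b'$ constant for $j\ge2$, whereas the half-mixture with $\delta_0$ produces mixed cumulants that violate this.
\end{itemize}
If a clean general argument resists, I would at least exhibit the failure for an open set of parameters, which suffices for an existence statement.

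\emph{Step (iii).} Suppose $(W_n)$ were equal in law to an i.i.d.\ cascade. Then its one-step moments would be $\mathbb{E}[W^p]=e^{\Lambda(p)}$, and Theorem~\ref{thm:characterization}(ii) would force the marginal to be log-Poisson, contradicting (ii). The failure of independence can also be seen directly: the events $\{X_1=0\}$ and $\{X_2=0\}$ are dependent, because adjacent slots alternate between the constant slot and the random slot.
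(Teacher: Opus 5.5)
Your proposal is correct and follows the paper's skeleton. That skeleton is: independent $\mathrm{Poisson}(2\lambda)$ jump slots alternating with deterministic slots, a uniform random phase, a countable-state Markov realization, and the count that every even window contains exactly $n/2$ jump slots. You differ from the paper in three details:
\begin{itemize}
\item \emph{Drift placement.} The paper puts $a$ in every slot ($Y_{2j-1}=a+bA_j$, $Y_{2j}=a$), whereas you put $2a$ in the jump slot and $0$ in the other.
\item \emph{Ergodicity.} The paper uses a $T^2$-invariance argument for the phase mixture $\tfrac12(P_0+P_1)$; you cite ergodicity of stationary irreducible positive-recurrent chains.
\item \emph{Part (iii).} The paper uses $\mathbb{P}(X_{n+1}=a\mid X_n\neq a)=1$; you use Theorem~\ref{thm:characterization}(ii), or the dependence of $\{X_1=0\}$ and $\{X_2=0\}$.
\end{itemize}

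Your drift placement gains exact per-step conservation when $\Lambda_{\mathrm{LP}}(1)=0$: then $\mathbb{E}[W_n]=\tfrac12\bigl(1+e^{2\Lambda_{\mathrm{LP}}(1)}\bigr)=1$. The paper's sequence instead has $\mathbb{E}[W_1]=\cosh a>1$, since conservation with $C>0$ forces $a=\gamma\ln r>0$.

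The cost is the commensurability case split in (ii), which the paper avoids because its marginal always lives on $a+b\mathbb{N}_0$. Your hedge there (falling back to an open set of parameters) is unnecessary, because the split closes in every case:
\begin{itemize}
\item The support alone rules out $2a\notin b\mathbb{Z}$, and also $2a=bj$ with $j\ge2$.
\item If $2a=bj$ with $j\le0$, the atom $0$ lies on $2a+b\mathbb{N}_0$. Matching the weights $e^{-\lambda'}\lambda'^n/n!$ at $n=|j|+1$ and $n=|j|+2$ forces both $\lambda'=2\lambda$ and $e^{-\lambda'}=\tfrac12 e^{-2\lambda}$, a contradiction.
\item If $2a=b$, the weight ratios $2\lambda/n$ cannot equal $\lambda'/(n+1)$ for all $n\ge1$.
\end{itemize}
So your construction works for every parameter triple, as the paper's does.

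Your argument also excludes every log-Poisson law. The paper's single comparison $\tfrac12(1+e^{-2\lambda})\neq e^{-\lambda}$ literally excludes only the one with parameters $(a,b,\lambda)$. That suffices to contradict Theorem~\ref{thm:characterization}, and the same ratio check gives the stronger reading for the paper's marginal in one line.
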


\begin{proof}
\textit{Construction.}  With $(a, b, \lambda)$ as in
Theorem~\ref{thm:characterization} and $\lambda' := 2\lambda$, let
$A_1, A_2, \ldots$ be i.i.d.\ $\mathrm{Poisson}(\lambda')$, define
\[
  Y_{2j-1} = a + bA_j, \qquad Y_{2j} = a \qquad (j \ge 1),
\]
draw a phase $\theta \sim \mathrm{Unif}\{0,1\}$ independent of
everything, and set $X_n := Y_{n+\theta}$, $W_n := e^{X_n}$: jump
slots of doubled intensity alternate with deterministic slots.

\textit{Stationarity and Markov realization.}  The pair $Z_n :=
((n+\theta)\bmod 2,\, X_n)$ is a Markov chain on
$\{0,1\}\times(a+b\mathbb{N}_0)$: from phase-1 states the next value
is $a + bA$ with fresh $A \sim \mathrm{Poisson}(\lambda')$ and the
phase flips; from phase-0 states the next value is $a$ and the phase
flips.  The chain is irreducible on its reachable set and positive
recurrent; the phase-uniform stationary law makes $(Z_n)$
stationary and $W_n$ a function of it.

\textit{Ergodicity.}  Let $P_0, P_1$ be the path laws given $\theta
= 0,1$, so the law is $\tfrac12(P_0+P_1)$ with $P_1 = P_0\circ
T^{-1}$ ($T$ = shift).  If $E$ is $T$-invariant, it is
$T^2$-invariant, and under $P_0$ the double shift is ergodic (the
blocks $(Y_{2j-1}, Y_{2j})$ are i.i.d.), so $P_0(E) \in \{0,1\}$;
and $P_1(E) = P_0(T^{-1}E) = P_0(E)$.  Hence $\mathbb{P}(E) \in
\{0,1\}$.

\textit{Exact exponents.}  For even $n = 2m$ the window
$\{1,\ldots,2m\}$ contains exactly $m$ jump slots under either
phase, carrying $m$ distinct i.i.d.\ $A_j$'s; hence, exactly, for
every real $p$,
\[
\begin{aligned}
  m_{2m}(p)
  &= e^{2map}\exp\bigl[m\lambda'(e^{bp}-1)\bigr]
  = \exp\bigl[2m\bigl(ap + \lambda(e^{bp}-1)\bigr)\bigr]\\
  &= \exp\bigl[2m\,\Lambda_{\mathrm{LP}}(p)\bigr].
\end{aligned}
\]
For odd $n$ the window covers $m$ or $m+1$ jump slots depending on
the phase, and $\tfrac1n\ln m_n(p) \to \Lambda_{\mathrm{LP}}(p)$.
Theorem~\ref{thm:cgflevel} then gives A1 exactly.

\textit{Non-log-Poisson marginal.}  $\mathbb{P}(X_1 = a) =
\tfrac12(1 + e^{-2\lambda})$, whereas the log-Poisson($\lambda$)
marginal has $\mathbb{P}(X = a) = e^{-\lambda}$; for She--L\'ev\^eque
dissipation parameters ($\lambda = 2\ln2$): $0.531$ versus $0.250$.
The marginal is the mixture $\tfrac12\delta_a +
\tfrac12\,\mathrm{law}(a + b\,\mathrm{Poisson}(2\lambda))$.

\textit{Non-i.i.d.}  Given $X_n \ne a$ the next multiplier is
deterministic: $\mathbb{P}(X_{n+1} = a \mid X_n \ne a) = 1 \ne
\mathbb{P}(X_{n+1} = a)$.
\end{proof}

\begin{remark}
The construction generalizes freely (blocks of length $L$, arbitrary
allocation of the total jump intensity across slots,
Markov-modulated loads): A1 is compatible with an
infinite-dimensional family of mutually singular stationary ergodic
processes, all sharing the log-Poisson asymptotics---exactly as
Theorem~\ref{thm:cgflevel} says they must.
\end{remark}

Two rigidity results delimit the boundary of
Theorem~\ref{thm:interleaved}.

\begin{corollary}[Exchangeable rigidity]\label{cor:exch}
Let $(W_n)$ be exchangeable with $m_n(p) = r^{n\zeta_p}$ holding
exactly for $n \in \{1,2\}$ and all lattice $p$, with $\zeta$
satisfying A1.  Then $(W_n)$ is i.i.d.\ log-Poisson with parameters
$(\beta, \gamma, C)$.
\end{corollary}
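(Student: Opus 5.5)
By de Finetti's theorem I would write the exchangeable sequence as a mixture of i.i.d.\ sequences. Concretely, I would take a random directing law $Q$ on $(0,\infty)$ such that, conditionally on $Q$, the $W_n$ are i.i.d.\ with law $Q$.

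For lattice $p=km$, set $M(p):=\int w^p\,Q(dw)$, a random variable. The hypotheses then read
\[
  \mathbb{E}[M(p)] = r^{\zeta_p},
  \qquad
  \mathbb{E}[M(p)^2] = m_2(p) = r^{2\zeta_p}.
\]
The second identity holds because $\mathbb{E}[(W_1W_2)^p\mid Q] = M(p)^2$. Hence $\Var M(p) = \mathbb{E}[M(p)^2]-(\mathbb{E} M(p))^2 = 0$. So for each lattice $p$, $M(p)=r^{\zeta_p}$ almost surely.

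Since the lattice is countable, I can intersect these almost-sure events. On a single full-measure event, $Q$ then has lattice moments $\int w^{km}\,dQ = r^{\zeta_{km}}$ for every $m\in\mathbb{N}_0$.

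The next step is to identify $Q$. Since $\zeta$ satisfies A1, Lemma~\ref{lem:exponent} gives $\zeta_{km}=\gamma km + C(1-\beta^m)$. Lemma~\ref{lem:determinacy} only uses the one-step moment identity $\mathbb{E}[V^m]=r^{\zeta_{km}}$, so its proof applies verbatim to a law with these lattice moments. It shows that $Q$ is supported in $[0,r^\gamma]$ and is determined by its lattice moments. Theorem~\ref{thm:characterization}(ii) shows that the log-Poisson law with $(a,b,\lambda)=(\gamma\ln r,(\ln\beta)/k,-C\ln r)$ realizes exactly these moments. Therefore $Q=Q_{\mathrm{LP}}$ almost surely.

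A mixture of i.i.d.\ laws whose directing measure is a.s.\ constant is simply i.i.d. Hence $(W_n)$ is i.i.d.\ with log-Poisson marginal, with parameters $(\beta,\gamma,C)$ as in the dictionary of Theorem~\ref{thm:characterization}.

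The step needing most care is the variance-zero argument, specifically confirming that the lattice moments are finite under the mixture so that the variance computation is legitimate. This follows because $m_2(p)<\infty$ by hypothesis, which makes $M(p)\in L^2$.

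The argument also needs the exchangeable sequence to be infinite, so that de Finetti's theorem applies; I would assume this, as is implicit in a cascade. With a finite sequence one would only have a signed-mixture representation, and I would note that as the limitation.

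The case $C=0$ also works: then $Q$ is a point mass at $r^\gamma$, which is the degenerate edge case.
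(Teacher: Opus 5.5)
Your proposal is correct and follows the paper's proof essentially step for step: de Finetti's representation, then the identities $\mathbb{E}[M(p)] = r^{\zeta_p}$ and $\mathbb{E}[M(p)^2] = r^{2\zeta_p}$ forcing $\Var M(p) = 0$ simultaneously on the countable lattice, then Lemma~\ref{lem:determinacy} with Theorem~\ref{thm:characterization}(ii) to identify the a.s.\ constant directing law as log-Poisson. Your extra remarks (countable intersection of full-measure events, $L^2$ finiteness of $M(p)$, the need for an infinite sequence, the $C=0$ edge case) are sound and make explicit details the paper leaves implicit.
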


\begin{proof}
By de Finetti's theorem \cite{Kal02}, $(W_n)$ is conditionally
i.i.d.\ given a random directing measure; let $M_p :=
\mathbb{E}[W_1^p \mid \text{directing measure}]$.  Conditional
independence gives $m_1(p) = \mathbb{E}[M_p]$ and $m_2(p) =
\mathbb{E}[M_p^2]$, so exactness at $n = 1,2$ reads
$\mathbb{E}[M_p] = r^{\zeta_p}$, $\mathbb{E}[M_p^2] =
r^{2\zeta_p}$, whence $\Var(M_p) = 0$: $M_p = r^{\zeta_p}$ a.s., for
every lattice $p$ simultaneously.  Almost every directing measure
therefore has exactly the A1 lattice moments, hence equals
\emph{the} log-Poisson law by Lemma~\ref{lem:determinacy} and
Theorem~\ref{thm:characterization}(ii); the mixture is degenerate.
\end{proof}

\begin{theorem}[Finite-state impossibility]\label{thm:finitestate}
Let $\xi$ be an irreducible finite-state Markov chain, stationary,
and $W_n = e^{f(\xi_n)}$ with $f$ non-constant.  Then the cascade
cannot satisfy A1 in its real-$p$ form with nontrivial
intermittency: there are no parameters $(\beta, \gamma, C)$ with
$C > 0$ and $\zeta_p = \gamma p + C(1-\beta^{p/k})$ for all real
$p \ge 0$.
\end{theorem}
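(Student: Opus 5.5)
The plan is to compute $\Lambda$ explicitly by Perron--Frobenius and then show that the log-Poisson form $\Lambda_{\mathrm{LP}}(p)=ap+\lambda(e^{bp}-1)$ with $\lambda=-C\ln r>0$ and $b=(\ln\beta)/k<0$ is incompatible with the algebraic structure of a Perron root. Let $P$ be the transition matrix on the $N$ states, $\pi$ its stationary law, and $D_p=\operatorname{diag}(e^{pf(i)})$. By the Markov property,
\[
  m_n(p)=\pi D_p\,(PD_p)^{n-1}\mathbf 1 .
\]
The matrix $PD_p$ is nonnegative and irreducible, since it has the same zero pattern as $P$. Let $\rho(p)$ be its Perron root. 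Because $\pi D_p$ and $\mathbf 1$ are strictly positive, pairing them with the positive Perron eigenvectors gives $c\,\rho(p)^n\le m_n(p)\le c'\rho(p)^n$. This holds even for a periodic chain, via the standard bounds $\|(PD_p)^n\|\asymp\rho^n$ for irreducible nonnegative matrices. Hence (S) holds and $\Lambda(p)=\ln\rho(p)$ for all real $p$, so in particular $\zeta_p=\ln\rho(p)/\ln r$.

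Next comes analyticity. The Perron root of an irreducible nonnegative matrix is a simple eigenvalue. The entries of $PD_p$ are entire in $p$, so by the implicit function theorem $p\mapsto\rho(p)$ is real-analytic on all of $\mathbb R$. Suppose A1 held in real-$p$ form with $C>0$. Then $\ln\rho(p)=\Lambda_{\mathrm{LP}}(p)$ on $[0,\infty)$, and by analytic continuation this identity holds on all of $\mathbb R$. (The right side is entire.)

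The key step is to plug $\rho(p)=e^{ap}\exp(\lambda(e^{bp}-1))$ into the characteristic polynomial $\det(xI-PD_p)=x^N+\sum_{j<N}c_j(p)x^j$. Each coefficient $c_j(p)$ is a finite sum of terms $\kappa\,e^{ps}$, with $s$ ranging over sums of values of $f$, i.e.\ an exponential polynomial. We obtain, for all real $p$,
\[
  e^{Nap}e^{N\lambda(e^{bp}-1)}+\sum_{j<N}c_j(p)\,e^{jap}e^{j\lambda(e^{bp}-1)}=0 .
\]
Now let $p\to-\infty$. Then $e^{bp}\to+\infty$ because $b<0$. Divide by the leading term: the $j$-th term becomes an exponential polynomial times $\exp(-(N-j)\lambda e^{bp})$. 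Since $\lambda>0$ and $N-j\ge1$, this double-exponential decay beats any single exponential $e^{p(s+ja-Na)}$. Every non-leading term therefore tends to $0$, while the leading term contributes $1$, a contradiction. So no $(\beta,\gamma,C)$ with $C>0$ exists. Equivalently, functions $e^{\alpha p}\exp(j\lambda e^{bp})$ with distinct $j$ are linearly independent over exponential polynomials. The non-constancy of $f$ is only needed to exclude the monofractal case, where $C=0$ is genuinely attained.

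The main obstacle I expect is justifying the passage to $p<0$. A1 is only imposed for $p\ge0$, and for $p\to+\infty$ the double exponential $\exp(\lambda e^{bp})\to e^{0}$ is harmless and gives no contradiction. So everything hinges on analytic continuation of $\ln\rho$ to all of $\mathbb R$, which requires simplicity and positivity of the Perron root for every real $p$; irreducibility supplies both. A secondary point is the $m_n\asymp\rho^n$ sandwich for periodic chains, which I would handle via positive eigenvectors rather than primitivity.
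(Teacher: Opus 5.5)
Your proof is correct and has the same architecture as the paper's. Both identify $\Lambda(p)=\ln\rho(M(p))$ with $M(p)=PD_p$. Both use simplicity of the Perron root to get real-analyticity on all of $\mathbb{R}$, then the identity theorem to carry the log-Poisson closed form to $p<0$. Both then let $p\to-\infty$, where $\lambda e^{bp}$ blows up doubly exponentially.

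The only difference is how you bound the growth of $\rho(p)$. The paper does it in one line. For $p\le 0$ every row sum of $M(p)$ is $\sum_y P_{xy}e^{pf(y)}\le e^{pf_{\min}}$, so $\rho(M(p))\le e^{pf_{\min}}$ and $\Lambda(p)\le pf_{\min}$. This linear upper bound clashes at once with $\Lambda_{\mathrm{LP}}(p)=ap+\lambda(e^{|b||p|}-1)$.

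Your characteristic-polynomial argument is in effect a Cauchy-type root bound: a root of a monic polynomial whose coefficients are exponential polynomials grows at most single-exponentially. It reaches the same contradiction with more machinery. It also hides the mechanism the paper stresses, namely that a finite alphabet makes $\ln W$ bounded below.

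Your route does add something. The sandwich $m_n(p)\asymp\rho(p)^n$, obtained by pairing with strictly positive Perron eigenvectors, justifies $\Lambda=\ln\rho$ for periodic chains too. The paper simply asserts that identity.
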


\begin{proof}
For finite irreducible chains, $\Lambda(p) = \ln\rho(M(p))$ with
$M(p)_{xy} = P_{xy}e^{pf(y)}$ and $\rho$ the Perron root---a simple
eigenvalue for every real $p$, hence real-analytic on $\mathbb{R}$.
If the closed form held on $[0,\infty)$, then $\Lambda(p) = ap +
\lambda(e^{bp}-1)$ there with $\lambda = -C\ln r > 0$; both sides
are real-analytic on $\mathbb{R}$ and agree on an interval, hence
agree everywhere (identity theorem).  Now let $p \to -\infty$.
Since $f_{\min} = \min_s f(s) > -\infty$, every row sum of $M(p)$
is at most $e^{pf_{\min}}$ for $p \le 0$, so $\Lambda(p) \le
pf_{\min}$: a linear upper bound.  But with $b < 0$,
\[
  \Lambda_{\mathrm{LP}}(p) - pf_{\min}
  \;\ge\; \lambda e^{|b||p|} - \lambda + p(a - f_{\min})
  \;\longrightarrow\; +\infty :
\]
contradiction.
\end{proof}

\begin{remark}[The boundary, and what A1 really is]
The mechanism of Theorem~\ref{thm:finitestate} is that finite
alphabets make $\ln W$ bounded below, while the log-Poisson
generator is intrinsically unbounded below ($\ln W \in a +
b\mathbb{N}_0$): A1 forces multipliers with arbitrarily severe
attenuation events, and the counterexample of
Theorem~\ref{thm:interleaved} necessarily has unbounded-below
$\ln W$.  Assembled, this section says: A1 is an
\emph{asymptotic-statistics} axiom.  It pins the limiting cumulant
function, the spectrum, and the large deviations to the log-Poisson
cascade (Theorem~\ref{thm:cgflevel}); it cannot pin the per-step law
(Theorem~\ref{thm:interleaved}); and the i.i.d.\ log-Poisson cascade
is the canonical realization---unique under exchangeability
(Corollary~\ref{cor:exch}), with finite-state Markov realizations
impossible (Theorem~\ref{thm:finitestate}).  Whether
Theorem~\ref{thm:finitestate} persists under lattice-only A1
remains open (the identity-theorem step is unavailable on a discrete
set; see Section~\ref{sec:discussion}).
\end{remark}

\section{Continuous cascades: log-infinitely-divisible multifractal
measures}
\label{sec:mrm}

We now place the theory in the continuous category of Bacry--Muzy
multifractal random measures \cite{BM03,MB02}, which contains the
log-normal multifractal random walk, log-stable measures, and the
Barral--Mandelbrot compound Poisson cascades \cite{BM02} as special
cases.  We use two structural properties of the class as axioms:

\smallskip
\noindent\textbf{(M1) Exact stochastic scale invariance.}  For every
$\sigma \in (0,1)$ and $t \le T$ (the integral scale),
\[
  \bigl(M(\sigma t)\bigr)_t \;\stackrel{d}{=}\;
  \sigma\,e^{\Omega_\sigma}\bigl(M(t)\bigr)_t,
\]
with $\Omega_\sigma$ infinitely divisible, independent of $M$, and
$\mathbb{E}[e^{q\Omega_\sigma}] = \sigma^{-\psi(q)}$, where $\psi$
is the L\'evy exponent of the generator per unit logarithmic scale.

\smallskip
\noindent\textbf{(M2) Conservation.}  $\mathbb{E}[e^{\Omega_\sigma}]
= 1$, i.e.\ $\psi(1) = 0$.

\smallskip
\noindent\textbf{(S$_c$)}  $\psi(q) < \infty$ for all $q \ge 0$ (the
continuous analogue of finite multiplier moments).

\smallskip
From (M1), wherever $\mathbb{E}[M([0,t])^q] < \infty$,
\[
  \mathbb{E}\bigl[M([0,t])^q\bigr] \propto t^{\zeta_q},
  \qquad
  \zeta_q = q - \psi(q),
\]
and moments of the total mass are finite (for $q > 1$) precisely on
the window where $\zeta_q > 1$ \cite{BM03}.  Since $\zeta$ is
concave with $\zeta_1 = 1$, \emph{structure functions see only a
finite window $[0, q^*)$}---a hard information barrier with no
discrete counterpart (there the log-Poisson multiplier is bounded
and all moments exist).  The hierarchical symmetry A1 is imposed on
$\delta_q = \zeta_{q+k} - \zeta_q$ as before.

The dictionary to the discrete theory is one line: with $\phi_c(q)
:= \psi(q+k) - \psi(q)$ one has $\delta_q = k - \phi_c(q)$, and A1
gives, by Lemma~\ref{lem:exponent},
\[
  \phi_c(mk) - \phi_{c,\infty} = A_c\,\beta^m,
  \qquad
  A_c = -(\delta_0 - \delta_\infty) = -C(1-\beta) < 0 :
\]
formally the discrete identities with $\ln r \mapsto -1$, so $|A_c|
= C(1-\beta)$ and the rate $\lambda = -C\ln r$ becomes the intensity
$C$ \emph{per unit log-scale}.  Every $|\ln r|$ in the discrete
constants disappears.

\begin{theorem}[No finite moment window identifies the
class]\label{thm:window}
Fix the She--L\'ev\^eque exponent curve $\zeta^{\mathrm{SL}}_q =
\gamma q + C(1-\beta^{q/k})$ normalized by \textup{(M2)}, and any
finite lattice window $F \subset k\mathbb{N}_0$.  Then there is a
continuum of exactly scale-invariant log-ID multifractal random
measures whose generators are not compound Poisson with a single
atom---in particular are not the compound Poisson cascade---yet
whose exponents satisfy $\zeta_q = \zeta^{\mathrm{SL}}_q$ for every
$q \in F$.  Structure-function data on a finite moment window, even
exact and noise-free, cannot certify the log-Poisson class.
\end{theorem}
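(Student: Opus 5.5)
The plan is to realize the required measures through their generators. Under (M1)--(M2), a Bacry--Muzy log-ID multifractal random measure is specified by its L\'evy exponent $\psi$ per unit log-scale; any L\'evy triplet with $\psi(q)<\infty$ for $q\ge 0$ and $\psi(1)=0$ yields an exactly scale-invariant measure (\cite{BM03,MB02}). Because $\zeta_q = q-\psi(q)$, fixing $\zeta$ on $F$ is the same as fixing $\psi$ on the finite set $F\cup\{1\}$, the extra point carrying the normalization $\psi(1)=0$. I will restrict to pure-jump generators with negative jumps and finite activity, $\psi(q) = aq + \int_{(-\infty,0)}(e^{qx}-1)\,\nu(dx)$ with $\nu$ a finite measure. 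Then $\psi$ is finite for every $q\ge0$, so (S$_c$) is automatic. The target is $\psi^{\mathrm{SL}}(q) = a_0 q + \lambda_0(\beta^{q/k}-1)$ with $\nu_0 = \lambda_0\delta_{b_0}$, $b_0 = (\ln\beta)/k$ and $\lambda_0 = C$.

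Step 1 linearizes the constraints. Write $F\cup\{1\} = \{q_1,\dots,q_N\}$ and perturb $\nu = \nu_0 + \tau\rho$, $a = a_0 + \tau\alpha$, where $\rho$ is a finite signed measure on $(-\infty,0)$. The constraints become $\alpha q_i + \int (e^{q_i x}-1)\,\rho(dx) = 0$ for $i=1,\dots,N$. Set $\alpha=0$ and push $\rho$ forward under $u=e^{x}\in(0,1)$. The conditions then say that $\rho$ annihilates the $N$ functions $u^{q_i}-1$. The space of signed measures supported on a fixed set of $N+2$ points in $(0,1)$ has dimension $N+2$, so the annihilator of these $N$ functionals has dimension at least $2$. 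It therefore contains some nonzero $\rho$ that is not a multiple of $\delta_{e^{b_0}}$, and one can choose the support points to avoid $e^{b_0}$.

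Step 2 secures positivity. Make $\rho$ vanish at $e^{b_0}$, and keep its remaining support points distinct from $e^{b_0}$. Then $\nu_0+\tau\rho$ is not yet positive, because $\nu_0$ puts no mass where $\rho$ has negative weights. To fix this, first replace $\nu_0$ by the positive base $\nu_0' = \lambda_0\delta_{b_0} + c\sum_j \delta_{x_j}$ and then compensate. A cleaner construction is to solve directly for a positive measure: take $N+2$ points $x_1,\dots,x_{N+2}$ including $b_0$, and look for positive weights $w_j$ with $\sum_j w_j(e^{q_i x_j}-1) = \psi^{\mathrm{SL}}(q_i) - a_0q_i$ for all $i$. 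The vector $w^{(0)} = \lambda_0 e_{b_0}$ solves this, but it sits on the boundary of the positive orthant. I will instead use the mixture $\nu_s = (1-s)\lambda_0\delta_{b_0} + s\,\nu^{(1)}$, where $\nu^{(1)}$ is a positive finite-activity measure. The linear defect of $\nu^{(1)}$ on the $N$ test functions is absorbed by adding a correction $\kappa$ supported on $N$ further generic points $y_i$. The correction solves an $N\times N$ system with matrix $(e^{q_i y_j}-1)$. This matrix is invertible for distinct $y_j<0$ and distinct $q_i>0$ by the Chebyshev (Descartes) property of exponential sums, with the point $q=0$ handled trivially because $\psi(0)=0$. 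The correction is $O(s)$, so it must be kept positive by adding an $s$-small positive bump at those $y_i$ beforehand, i.e.\ by working inside the interior of the positive cone. Openness of the solution set inside the affine fibre then gives an $(N+2-N)$-, hence at least $2$-, dimensional, and in particular continuum, family of positive $\nu$ that match $\psi^{\mathrm{SL}}$ on $F\cup\{1\}$.

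Step 3 checks non-identifiability. Each member of the family has at least two atoms with positive mass, so by the Step~4 rigidity of Theorem~\ref{thm:classification} in the continuous dictionary it is not a single-atom compound Poisson generator. Distinct weight vectors give distinct $\nu$, and hence distinct generators and distinct laws. The main obstacle is Step~2: keeping the weights positive while meeting $N$ linear constraints exactly. I would resolve it by starting from a strictly positive interior solution, built from the two-atom family of the sharpness Remark (atoms at $\beta\pm d$) and adjusted by the invertible Chebyshev system, and then applying the implicit function theorem.
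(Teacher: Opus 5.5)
Your Step~2 has a genuine gap, and for a general window it cannot be closed.

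The positivity device is circular. With the support points fixed and $(e^{q_iy_j}-1)$ invertible, the net weights at the $y_j$ are uniquely determined by the rest of the measure. A bump added ``beforehand'' is therefore exactly cancelled by the correction. The openness argument also needs a strictly positive \emph{exact} solution to start from, and you never exhibit one; the $\beta\pm d$ family is exact only for $F\subseteq\{0,1,2\}$.

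More seriously, such a solution need not exist at all. For any L\'evy--Khintchine exponent with triplet $(a,\sigma_0^2,\Pi)$ finite on $[0,\infty)$, step-$k$ second differences annihilate both the drift and the compensator. With $u=e^{kx}$,
\[
  \psi((m+2)k)-2\psi((m+1)k)+\psi(mk)=\int_{(0,\infty)}u^m\,d\theta,
  \qquad
  \theta:=\sigma_0^2k^2\,\delta_1+(1-u)^2\,\tilde\Pi(du)\;\ge\;0,
\]
while for $\psi^{\mathrm{SL}}(q)=q-\zeta^{\mathrm{SL}}_q$ the left side equals $C(1-\beta)^2\beta^m$. Suppose $\zeta=\zeta^{\mathrm{SL}}$ on $\{0,k,2k,3k,4k\}$. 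Then the cases $m=0,1,2$ give $\int(u-\beta)^2\,d\theta=0$, hence $\theta=C(1-\beta)^2\delta_\beta$. So $\sigma_0^2=0$ and $\Pi=C\delta_b$, and matching at $q=k$ fixes the drift: the generator \emph{is} the compound Poisson cascade.

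Under your fixed-drift ansatz ($\alpha=0$, $k=1$) the obstruction appears already at $F=\{0,1,2,3\}$. There $(1-u)\rho$ must have vanishing moments of orders $0,1,2$. Its positive part would then carry the mass, mean and second moment of a multiple of $\delta_\beta$, which forces $\rho=0$. The count $N+2-N\ge2$ is therefore beside the point: positivity can shrink the affine fibre to the single CPC point.

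The paper's construction is explicit only for $k=1$, $F=\{0,1,2\}$. Its key move is the opposite of your decision to make $\rho$ vanish at $e^{b_0}$. The negative part of the perturbation sits \emph{on} the CPC atom, whose mass $C$ absorbs it, and positive mass goes on two atoms $v_1<\beta<v_2$. The two linear constraints (unchanged (M2)-drift, and $\zeta_2$) are solved with $w_1,w_2>0$, so $\tilde\Pi_s\ge0$ for $s\in[0,C/c]$. In the $\eta$-picture these constraints say only that the added mass has the same total and mean as the mass removed at $\beta$. That is why straddling $\beta$ suffices.

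Your own family $\eta_d=\tfrac{C(1-\beta)}{2}(\delta_{\beta-d}+\delta_{\beta+d})$, for $0<d<\min(\beta,1-\beta)$ and read with $\ln r\mapsto-1$, already does the job. It has the CPC's (M2)-drift $C(1-\beta)$ and matches $\zeta^{\mathrm{SL}}_2$ exactly, so it proves the three-point case directly, with no correction step.

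The paper's one-line extension to arbitrary finite $F$ (``anchoring the negative part on the CPC atom'') runs into the same obstruction as your Step~2. The statement is therefore correct only for small windows (e.g.\ $k=1$, $F\subseteq\{0,1,2,3\}$ once the drift is left free), not for every finite $F$.
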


\begin{proof}
We give the construction for $k = 1$ and the physically typical
window $F = \{0,1,2\}$ (i.e.\ $2 < q^* \le 3$); larger windows are
identical with more atoms.  In the tilted coordinates $u = e^{x}$,
the CPC generator (Theorem~\ref{thm:cpc}) has $\tilde\Pi = C
\delta_\beta$.  Perturb:
\[
  \tilde\Pi_s = (C - sc)\,\delta_\beta + s\,(w_1\delta_{v_1} +
  w_2\delta_{v_2}),
  \qquad 0 < v_1 < \beta < v_2 < 1,
\]
with the drift re-fixed by (M2) for each $s$, and impose
\[
  \sum_{i=1,2} w_i\,(v_i^m - 1) \;=\; c\,(\beta^m - 1),
  \qquad m = 1, 2 .
\]
The $m=1$ equation makes the (M2)-drifts of $\tilde\Pi_s$ and
$\tilde\Pi_0$ coincide; the $m=2$ equation then matches $\zeta_2$;
$\zeta_0 = 0$ and $\zeta_1 = 1$ are automatic.  At $(\beta, v_1,
v_2) = (2/3,\,0.3,\,0.9)$, $c = 1$, the $2\times2$ system gives
$w_1 = 0.1852$, $w_2 = 2.0370$, both \emph{positive}, so
$\tilde\Pi_s \ge 0$ for all $s \in [0, C/c]$: a one-parameter family
of genuine L\'evy measures, none a single atom for $s > 0$, all
matching $\zeta^{\mathrm{SL}}$ exactly on $F$ (and differing beyond:
at $m = 3$ the perturbed exponent differs by $+0.0143$ at
$s = \tfrac12$).  For general finite $F$, the same ansatz with more
atoms imposes finitely many linear constraints on infinitely many
degrees of freedom, with positivity maintained by anchoring the
negative part on the CPC atom.
\end{proof}

\begin{remark}
Theorem~\ref{thm:window} does not contradict the discrete
classification: there A1 was available at \emph{all} lattice
orders---the divergence steps of Theorem~\ref{thm:classification}
need $q \to \infty$, and on a finite window even $\sigma_0^2 > 0$
survives (a quadratic log-normal exponent interpolates any
three-point window with the correct convexity).  Identifiability
requires constraints of unbounded order, which structure functions
cannot supply.  The scale-invariance factor $\Omega_\sigma$ can:
under \textup{(S$_c$)} it has all exponential moments, at every
$\sigma$, and its statistics (``magnitude'' statistics, in the
language of \cite{DMA01}) are observable at all orders.
\end{remark}

\begin{theorem}[Generator-level classification: A1 selects the
compound Poisson cascade]\label{thm:cpc}
Let $M$ satisfy \textup{(M1)}, \textup{(M2)}, \textup{(S$_c$)}, with
nontrivial intermittency $C > 0$, and define the generator
exponents $\zeta_q = q - \psi(q)$ for all $q \ge 0$.  Then A1 on the
full lattice $k\mathbb{N}_0$ holds if and only if
\[
  \sigma_0^2 = 0, \qquad \Pi = C\,\delta_b, \qquad
  b = \frac{\ln\beta}{k} < 0,
\]
with drift $\tilde a = C(1-\beta^{1/k})$ fixed by \textup{(M2)};
equivalently
\[
  \Omega_\sigma \;\stackrel{d}{=}\;
  \tilde a\,\ln(1/\sigma) \;+\; b\,\mathrm{Poisson}\bigl(C
  \ln(1/\sigma)\bigr)
  \qquad\text{for every } \sigma \in (0,1),
\]
log-Poisson at every scale ratio simultaneously, and $M$ is the
Barral--Mandelbrot compound Poisson cascade \cite{BM02} with fixed
multiplier atom $e^b = \beta^{1/k}$ and Poisson intensity $C$ on the
time--log-scale cone.  No other member of the Bacry--Muzy
class---log-normal, log-stable, or any intermediate---satisfies A1.
\end{theorem}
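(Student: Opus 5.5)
The plan is to transfer the proof of Theorem~\ref{thm:classification} through the dictionary $\ln r \mapsto -1$, working directly with the L\'evy exponent $\psi$ of the generator. Write
\[
  \psi(q) = \tilde a q + \tfrac{\sigma_0^2 q^2}{2} + \int\bigl(e^{qx}-1-qx\mathds{1}_{|x|\le1}\bigr)\,\Pi(dx),
\]
which is finite for all $q\ge0$ by (S$_c$). Set $\phi_c(q)=\psi(q+k)-\psi(q)$, so that $\delta_q = k-\phi_c(q)$. The exponents here are defined directly from $\psi$ for all $q\ge0$, so the finite-window barrier of Theorem~\ref{thm:window} does not arise.

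\emph{Reverse direction.} If $\sigma_0^2=0$ and $\Pi=C\delta_b$, then
\[
  \psi(q)=\tilde a q + C(\beta^{q/k}-1).
\]
This is the computation of Proposition~\ref{prop:converse} with $\ln r$ replaced by $-1$, so $\delta_{q+k}-\delta_\infty=\beta(\delta_q-\delta_\infty)$. Condition (M2), $\psi(1)=0$, forces $\tilde a = C(1-\beta^{1/k})$.

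\emph{Forward direction, Steps~1--3\textonehalf.} Here we repeat the corresponding steps of Theorem~\ref{thm:classification} verbatim; they use only the sign inventory of $g_p$ and the finiteness of $\liminf\delta_p$, which A1 (a contraction) supplies.
\begin{itemize}
\item $\sigma_0^2>0$ would give $\phi_c\to+\infty$.
\item Mass of $\Pi$ on $(0,\infty)$ would do the same, by monotone convergence.
\item $\int_{[-1,0)}|x|\,d\Pi=\infty$ would do the same, by Fatou's lemma.
\end{itemize}
In each case $\delta_q\to-\infty$, which contradicts A1. We may therefore split the integral and write
\[
  \phi_c(q)=c_0+\int_{(-\infty,0)}e^{qx}(e^{kx}-1)\,\Pi(dx),\qquad \phi_c\to c_0
\]
by dominated convergence.

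\emph{Step~4.} A1 together with Lemma~\ref{lem:exponent} gives, at $q=mk$,
\[
  \int e^{mkx}(e^{kx}-1)\,d\Pi = A_c\beta^m,\qquad A_c=-C(1-\beta)<0.
\]
Substitute $u=e^{kx}$ and set $\eta=(1-u)\,d\tilde\Pi$. Then $\int u^m\,d\eta=|A_c|\beta^m$, and the variance identity gives
\[
  \int(u-\beta)^2\,d\eta=0,
\]
so $\eta=|A_c|\delta_\beta$. De-tilting yields $\tilde\Pi=\tfrac{|A_c|}{1-\beta}\delta_\beta=C\delta_\beta$, that is, $\Pi=C\delta_b$ with $b=(\ln\beta)/k$. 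Intermittency $C>0$ guarantees a nonzero atom, and (M2) then fixes $\tilde a$.

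\emph{Identification.} With $\psi$ known, (M1) gives
\[
  \mathbb{E}[e^{q\Omega_\sigma}]=\sigma^{-\psi(q)}=\exp\bigl(\ln(1/\sigma)\,[\tilde a q + C(e^{bq}-1)]\bigr).
\]
This is the Laplace transform of $\tilde a\ln(1/\sigma)+b\,\mathrm{Poisson}(C\ln(1/\sigma))$, and for $q\ge0$ it determines the law: the variable is bounded above and its Laplace transform is finite on a half-line, which gives analyticity in a strip and hence uniqueness. Matching this to the Barral--Mandelbrot construction is a citation to \cite{BM02}: a Poisson point process on the time--log-scale cone with intensity $C$ per unit log-scale and fixed multiplier $e^b$ has exactly this scale-invariance factor. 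One should still check that (M1) together with the generator determines $M$ in law within the Bacry--Muzy class. I would treat that as part of the class's structure (the measure is built from its generator) rather than reprove it.

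The main obstacle I expect is Step~3\textonehalf\ in the continuous setting, namely making sure the L\'evy exponent's integrability conventions match the discrete ones so the Fatou argument goes through unchanged. The final identification of $M$ with the compound Poisson cascade rests on the cited constructions, not on new analysis.
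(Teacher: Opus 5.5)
Your proposal is correct and follows the paper's proof essentially step for step. The reverse direction is Proposition~\ref{prop:converse} under $\ln r \mapsto -1$, with (M2) fixing $\tilde a$; the forward direction reruns Steps~2, 3, 3\textonehalf{} and~4 of Theorem~\ref{thm:classification} on $\phi_c=\psi(\cdot+k)-\psi(\cdot)$, with $\eta=(1-u)\,d\tilde\Pi$ and the variance identity giving $\tilde\Pi=C\delta_\beta$; and the identification with the Barral--Mandelbrot cascade is left to citation (the paper cites \cite{BM03} for this). One small point: the law of $\Omega_\sigma$ needs no separate uniqueness argument, since once the L\'evy triplet is pinned it follows directly from (M1) and L\'evy--Khintchine uniqueness, so the ``bounded above'' remark can be dropped.
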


\begin{proof}
\textit{Reverse:}  with $\Pi = C\delta_b$, $\psi(q) = \tilde aq +
C(e^{bq}-1)$, so $\zeta_q = q - \psi(q) = (1-\tilde a)q +
C(1-\beta^{q/k})$, which satisfies A1 by
Proposition~\ref{prop:converse} with $\gamma = 1 - \tilde a$; (M2)
gives $\tilde a = C(1-\beta^{1/k})$, the continuous conservation
constraint ($\zeta_1 = 1$).

\textit{Forward:}  $\psi$ is a L\'evy--Khintchine exponent, finite
for all $q \ge 0$, and $\phi_c(q) = \psi(q+k)-\psi(q)$ has a finite
limit under A1: exactly the hypothesis configuration of
Theorem~\ref{thm:classification}'s proof under the dictionary
$\ln r \mapsto -1$.  Steps~2, 3, 3\textonehalf{} and~4 of that proof
apply verbatim: a Gaussian component, positive jumps, or
$\int_{|x|\le1}|x|\,d\Pi = \infty$ each force $\delta_q = k -
\phi_c(q) \to -\infty$; then with $\eta = (1-u)\,d\tilde\Pi$,
$u = e^{kx}$, A1 gives $\int u^m\,d\eta = |A_c|\beta^m$ and the
variance identity yields $\eta = |A_c|\delta_\beta$, i.e.\
$\tilde\Pi = \frac{|A_c|}{1-\beta}\delta_\beta = C\delta_\beta$ and
$\Pi = C\delta_b$.  The identification of the compound-Poisson
member of the Bacry--Muzy class with the Barral--Mandelbrot cascade
is \cite{BM03}.
\end{proof}

\begin{corollary}[Stability with native constants]\label{cor:mrmstab}
In the setting of Theorem~\textup{\ref{thm:cpc}}, if A1 holds to
within $\varepsilon$ on the generator lattice---with $d^*$ the true
limit (reading~(i)) or fitted (reading~(ii))---then $\sigma_0^2 =
0$, $\Pi$ is supported on $(-\infty,0]$, and with $\eta =
(1-u)\,d\tilde\Pi$, $\|\eta\| = |A_c| = C(1-\beta)$ exactly:
\[
  W_1\!\Bigl(\frac{\eta}{\|\eta\|},\,\delta_\beta\Bigr)
  \;\le\; \sqrt{\frac{1+\beta}{C(1-\beta)}}\;\sqrt{\varepsilon}
  \quad\textup{(i)},
  \qquad
  \le\; \sqrt{\frac{2}{C(1-\beta)}}\;\sqrt{\varepsilon}
  \quad\textup{(ii)},
\]
both sharp; for She--L\'ev\^eque values $(\beta, C) = (2/3, 2)$:
$1.58\sqrt\varepsilon$ and $1.73\sqrt\varepsilon$.  The propagation
to the law of the per-octave factor $e^{\Omega_{1/2}}$ holds at the
unconditional sharp rate $\Theta(\sqrt\varepsilon)$ of
Theorems~\ref{thm:propagation}--\ref{thm:lower} (whose proofs nowhere
used finite activity---the natural situation here).
\end{corollary}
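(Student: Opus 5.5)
The plan is to transport the proof of Theorem~\ref{thm:stability} verbatim through the dictionary $\ln r \mapsto -1$, exactly as the proof of Theorem~\ref{thm:cpc} transported Theorem~\ref{thm:classification}. Set $\phi_c(q) = \psi(q+k) - \psi(q)$, so that $\delta_q = k - \phi_c(q)$. The approximate hypothesis on the generator lattice bounds $\{\delta_{mk}\}$ by the same contraction estimate as in Step~0 of Theorem~\ref{thm:stability}, and hence bounds $\{\phi_c(mk)\}$.

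Steps~2, 3 and~3\textonehalf{} of Theorem~\ref{thm:classification} use only the L\'evy--Khintchine form of $\psi$ (finite for $q\ge0$ by (S$_c$)) and finiteness of $\liminf\delta_q$. Applying them to $\psi$ gives $\sigma_0^2=0$, $\supp\Pi\subseteq(-\infty,0]$ and $\int(|x|\wedge1)\,d\Pi<\infty$. The split form then holds:
\[
  \phi_c(q) = c_0 + \int e^{qx}(e^{kx}-1)\,\Pi(dx).
\]
By dominated convergence $\phi_c(q)\to c_0$, so $\delta_\infty = k - c_0$ exists. Passing to the limit in the hypothesis gives $(1-\beta)|d^*-\delta_\infty|\le\varepsilon$.

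Next come the exact moment identities. With $u=e^{kx}$ and $\eta=(1-u)\,d\tilde\Pi$, one has
\[
  \mu_m := \int u^m\,d\eta = c_0 - \phi_c(mk) = \delta_{mk}-\delta_\infty \ge 0 .
\]
This is the discrete identity with $|\ln r|$ replaced by $1$. In particular $\|\eta\| = \mu_0 = \delta_0-\delta_\infty$. Under the paper's parametrization (Lemma~\ref{lem:exponent}) this equals $C(1-\beta) = |A_c|$ exactly, where $C$ is read from $\delta_0$ and the true limit $\delta_\infty$. The signed residuals are $\epsilon_m = \mu_{m+1}-\beta\mu_m$, and the variance identity telescopes to
\[
  \int(u-\beta)^2\,d\eta = \epsilon_1-\beta\epsilon_0 .
\]
Under reading~(i) this is at most $(1+\beta)\varepsilon$. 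Under reading~(ii), using $\epsilon_m = h_m + t$ with $|t|\le\varepsilon$, it is at most $2\varepsilon$. Cauchy--Schwarz against the Dirac target then gives
\[
  W_1\bigl(\eta/\|\eta\|,\delta_\beta\bigr) \le \Bigl(\tfrac{1+\beta}{C(1-\beta)}\,\varepsilon\Bigr)^{1/2}
\]
in reading~(i), and the same with $2$ in place of $1+\beta$ in reading~(ii). For $(\beta,C)=(2/3,2)$ the constants are $\sqrt{(5/3)/(2/3)}=\sqrt{2.5}\approx1.58$ and $\sqrt{3}\approx1.73$.

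For sharpness I reuse the two-atom families of the Sharpness Remark with $|\ln r|=1$. One must check that they are realizable inside the Bacry--Muzy class. Any finite positive $\eta$ on $(0,1)$ gives a L\'evy measure $\tilde\Pi = \eta/(1-u)$, and (M2) is met by choosing the drift. Conditions (M1) and (S$_c$) are automatic for compound-Poisson generators supported on negative jumps, using the Bacry--Muzy construction \cite{BM03}.

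For the propagation claim to $e^{\Omega_{1/2}}$, note that $\Omega_{1/2}$ is infinitely divisible with L\'evy measure $\ln 2\cdot\Pi$. So it is precisely a discrete i.i.d.\ generator with $|\ln r|=\ln2$ whose tilted measure has the variance bound just proved, scaled by $\ln 2$. Theorem~\ref{thm:propagation} applies directly, since its proof never used finite activity. Theorem~\ref{thm:lower}'s family $\eta_d$, built from atoms at $\beta\pm d$, transfers the same way to give the matching lower bound.

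The main obstacle I expect is the bookkeeping in the realizability and normalization steps rather than any analysis. First, the sharpness limits (one atom with $u_0\downarrow0$) must remain admissible MRMs with (M2). Second, the identity $\|\eta\|=C(1-\beta)$ must be stated with $C$ defined via the true $\delta_\infty$ in both readings; I would fix this convention explicitly before the constants are read off.
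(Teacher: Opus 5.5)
Your route is the paper's: Steps~0--3 of Theorem~\ref{thm:stability} transported through $\ln r\mapsto-1$, $|A|\mapsto C(1-\beta)$, with the sharpness families and the propagation argument carried over. Your work on the inequalities is correct: $\mu_m=\delta_{mk}-\delta_\infty$, $\int(u-\beta)^2\,d\eta=\epsilon_1-\beta\epsilon_0$, the bounds $(1+\beta)\varepsilon$ and $2\varepsilon$, and the values $\sqrt{5/2}$ and $\sqrt3$. Your observation that $e^{\Omega_{1/2}}$ is literally a conservative i.i.d.\ multiplier with $r=1/2$, L\'evy measure $\ln2\cdot\Pi$ and the same A1 residuals (with $d^*$ shifted by $k$) makes the propagation transfer more concrete than the paper's ``symbol-for-symbol''. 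The step that does not go through, in your write-up and equally in the paper's one-line proof, is ``both sharp'' for the displayed $W_1$ constants.

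The two-atom families of the Sharpness Remark saturate only the variance inequality $\int(u-\beta)^2\,d\eta\le(1+\beta)\varepsilon$ (resp.\ $2\varepsilon$). The Cauchy--Schwarz step that follows is an equality only when $|u-\beta|$ is $\eta$-a.e.\ constant. Those families put atoms at distances about $\beta$ and $v-\beta$ from $\beta$, which agree only if $v=2\beta$, and that requires $\beta\le1/2$.

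For $\beta>1/2$ no admissible $\eta$ is extremal in both steps. In reading~(i), extremality forces $\eta$ onto two atoms $\beta\pm s$ with $s\le1-\beta$, $\epsilon_0=-\varepsilon$ and $\epsilon_1=\varepsilon$; then $\epsilon_2=(\beta^2+2\beta-s^2)\varepsilon\ge(4\beta-1)\varepsilon>\varepsilon$. In reading~(ii), it forces equal weights, $\epsilon_0=0$ and $\epsilon_1=2\varepsilon$; then $\epsilon_2=4\beta\varepsilon>2\varepsilon$. Upper semicontinuity on the compact set of normalized measures on $[0,1]$, together with the expansion $\epsilon_m\approx\beta^m\epsilon_0+m\beta^{m-1}\int(u-\beta)^2\,d\eta$ near $\delta_\beta$, shows the constants are not even approached in the limit. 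So at the She--L\'ev\^eque value $\beta=2/3$, the bounds $1.58\sqrt\varepsilon$ and $1.73\sqrt\varepsilon$ are valid but not sharp.

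What your argument (like the paper's) proves sharp is the variance constant. To claim $W_1$-sharpness you must either restrict to $\beta\le1/2$, or state sharpness only at the variance level. For $\beta\le1/2$, take $v=2\beta$ in reading~(i), or symmetric atoms tending to $0$ and $2\beta$ in reading~(ii); these saturate both steps in the limit.
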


\begin{proof}
Theorem~\ref{thm:stability}, Steps~0--3, under the dictionary
$\ln r \mapsto -1$, $|A| \mapsto |A_c| = C(1-\beta)$; the sharpness
families transfer symbol-for-symbol, as does the propagation
argument of Section~\ref{sec:propagation}.
\end{proof}

\begin{remark}[What the two theorems mean together]
Theorems~\ref{thm:window} and~\ref{thm:cpc} are two halves of one
statement about observability: structure functions cannot identify
the cascade class even in principle (a finiteness barrier, not a
statistical one), while magnitude statistics classify it completely,
with quantitative stability.  This places a theorem under the
long-standing practical preference for magnitude-cumulant analysis
over high-order structure functions \cite{DMA01}.  Note also that
the continuous category is \emph{more} rigid than the discrete one:
infinite divisibility, an assumption in
Theorem~\ref{thm:classification}, is automatic here (consistency of
$\Omega_{\sigma\sigma'} \stackrel{d}{=} \Omega_\sigma +
\Omega'_{\sigma'}$), so the classification needs no distributional
hypothesis beyond membership in the class.  Finally, the
most-singular-branch geometry (Corollary~\ref{cor:branch}) reads
natively: the probability that the cone above a point carries no
Poisson point down to scale $\ell$ is $\ell^{\,C}$---codimension
$C$, with no discretization anywhere.
\end{remark}

\section{Corollaries}
\label{sec:corollaries}

\begin{corollary}[Conservation constraint]\label{cor:conservation}
If there exists an index $k_0 > 0$ such that $\zeta_{k_0} = z_0$ for
a known constant $z_0$ fixed by an exact conservation law, then
\[
  \gamma = \frac{z_0 - C(1-\beta^{k_0/k})}{k_0}.
\]
This reduces the observable parameters from two to one.
\end{corollary}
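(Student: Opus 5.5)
The plan is a direct substitution into the closed form of Lemma~\ref{lem:exponent}; no probabilistic input is needed. A1 gives, via that lemma, $\zeta_p = \gamma p + C(1-\beta^{p/k})$. This is derived on the lattice $k\mathbb{N}_0$ and, by the convention stated in the proof of Lemma~\ref{lem:exponent} (and consistently with the log-Poisson identification in Theorem~\ref{thm:characterization}(ii)), it holds for all real $p\ge 0$. So the first step is to evaluate this formula at $p = k_0$:
\[
  z_0 = \zeta_{k_0} = \gamma k_0 + C\bigl(1-\beta^{k_0/k}\bigr).
\]

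The second step is to solve this linear equation for $\gamma$. Since $k_0>0$, we can divide by $k_0$, which gives
\[
  \gamma = \frac{z_0 - C(1-\beta^{k_0/k})}{k_0}.
\]
The conservation remark after Corollary~\ref{cor:branch} is the special case $k_0=1$, $z_0=0$, which serves as a sanity check. In the continuous setting the same computation with $z_0 = 1$ recovers $\tilde a = C(1-\beta^{1/k})$ from Theorem~\ref{thm:cpc}.

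The final step is the parameter count. The observable parameter set $(\beta,\gamma,C)$ from the table in Section~\ref{sec:axiom} loses $\gamma$ as a free quantity. Once $\beta$ is read off as the contraction ratio, the remaining free parameters $(\gamma, C)$ reduce to one.

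The only point needing care is the case $k_0 \notin k\mathbb{N}_0$. There the formula at $p=k_0$ rests on the off-lattice extension rather than on A1 itself. I would justify it either by the convention of Lemma~\ref{lem:exponent} or, in the i.i.d.\ setting, by the fact that $W$ is log-Poisson (Theorem~\ref{thm:characterization}(ii)), so $\mathbb{E}[W^{p}] = r^{\gamma p + C(1-\beta^{p/k})}$ at every real $p\ge0$. This is the only step I regard as a potential obstacle, and it is a bookkeeping issue rather than a real difficulty.
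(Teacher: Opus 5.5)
Your proposal is correct and follows the same route as the paper. The paper gives no separate proof of this corollary; it is the general form of the computation in the Remark on conservation, namely substituting $p = k_0$ into \eqref{eq:zeta} and solving the resulting linear equation for $\gamma$. Your treatment of an off-lattice $k_0$, using the real-$p$ validity of \eqref{eq:zeta} from Theorem~\ref{thm:characterization}(ii), matches the paper's own convention in Lemma~\ref{lem:exponent}.
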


\begin{corollary}[Codimension identification]\label{cor:codimension}
If the most singular structures have Hausdorff codimension
$C_{\mathrm{geom}}$ and $C = C_{\mathrm{geom}}$, then $\beta$
alone determines the full exponent curve, the multifractal spectrum,
and the cascade distribution.
\end{corollary}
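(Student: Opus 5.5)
The statement to prove is Corollary~\ref{cor:codimension}, and the plan is to reduce it to an algebraic substitution into the exponent form, followed by an appeal to the characterization theorem.

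\emph{Step 1: reduce the parameters.} Start from Lemma~\ref{lem:exponent}, so that under A1
\[
\zeta_p = \gamma p + C\bigl(1-\beta^{p/k}\bigr),
\]
with three parameters $(\beta,\gamma,C)$. The hypothesis $C = C_{\mathrm{geom}}$ fixes $C$ as a known geometric constant. Corollary~\ref{cor:branch} justifies this identification: the always-maximal paths have codimension exactly $C$, matching $f(h_{\min}) = d - C$. For the remaining parameter $\gamma$, invoke the conservation constraint. This is either Corollary~\ref{cor:conservation} with the normalization $\zeta_{k_0} = z_0$, or in the standard setup $\zeta_1 = 0$, which gives $\gamma = -C(1-\beta^{1/k})$. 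Either way $\gamma$ becomes an explicit function of $(\beta, C)$, hence of $\beta$ alone once $C$ is known. The statement should be read as presupposing this conservation law, as in the surrounding corollaries. I will make that explicit: without a conservation constraint $\gamma$ remains free, so I will cite Corollary~\ref{cor:conservation} as the mechanism.

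\emph{Step 2: the downstream objects.} With $(\gamma, C)$ expressed through $\beta$, the exponent curve~\eqref{eq:zeta} is a function of $\beta$ only. Theorem~\ref{thm:characterization}(iii) then gives the spectrum
\[
f(h) = d - C + Cx(1-\ln x), \qquad x = k(h-\gamma)/(C|\ln\beta|),
\]
together with its domain $[\gamma,\gamma + (C/k)|\ln\beta|]$, all depending only on $\beta$ (and the fixed $d, k, r$). Theorem~\ref{thm:characterization}(ii) gives the multiplier law with parameters
\[
a = \gamma\ln r, \qquad b = (\ln\beta)/k, \qquad \lambda = -C\ln r.
\]
By Lemma~\ref{lem:determinacy} this law is the unique one compatible with the lattice moments. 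So the cascade distribution is also determined by $\beta$.

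\emph{Main obstacle.} There is no analytic difficulty here; the only care needed is interpretive. I must state precisely that the scale ratio $r$, the hierarchy step $k$, the dimension $d$, and the conservation datum are treated as fixed known data. I must also make sure the conservation relation is the one that eliminates $\gamma$, since otherwise the count "from two to one" in Corollary~\ref{cor:conservation} would leave $\gamma$ undetermined. In the nontrivial case $C>0$ all the formulas are well defined ($\lambda>0$, $\beta\in(0,1)$). The edge case $C=0$ is the monofractal one, where $\beta$ is irrelevant and everything is trivially determined.
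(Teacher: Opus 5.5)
Your proposal is correct and follows the argument the paper intends (the paper gives no separate proof). The hypothesis fixes $C$, and the conservation constraint fixes $\gamma$, either through Corollary~\ref{cor:conservation} or through $\zeta_1=0$ from the standing normalization $\mathbb{E}[W]=1$. Lemma~\ref{lem:exponent} and Theorem~\ref{thm:characterization}(ii)--(iii) then give the exponent curve, the spectrum and the multiplier law as functions of $\beta$, with $r$, $k$ and $d$ treated as fixed data.

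You are also right to state explicitly that this conclusion needs the conservation constraint, even though the setup says no proof depends on it. The paper leaves that dependence tacit, and without it $\gamma$ would stay free.
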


\begin{corollary}[Spectrum width]\label{cor:width}
The width of the multifractal spectrum is
\[
  \Delta h = h_{\max} - h_{\min} = \frac{C}{k}\,|\ln\beta|,
\]
where $h_{\max} = \gamma + (C/k)|\ln\beta|$ (at $p = 0$, most
regular) and $h_{\min} = \gamma$ (at $p \to \infty$, most singular).
\end{corollary}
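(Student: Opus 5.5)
The plan is to read the spectrum width off the explicit parametrization in Theorem~\ref{thm:characterization}(iii), so the whole argument reduces to identifying the endpoints of the admissible range of $h$. By Lemma~\ref{lem:exponent}, $\zeta_p = \gamma p + C(1-\beta^{p/k})$. Hence
\[
  \zeta_p' = \gamma + \frac{C}{k}|\ln\beta|\,\beta^{p/k},
\]
which is strictly decreasing in $p$ because $\zeta_p'' < 0$ whenever $C>0$. In the Legendre transform $f(h) = \inf_p[ph - \zeta_p + d]$, the stationarity condition is $h = \zeta_p'$. This gives a bijection between $p \in [0,\infty)$ and $h \in (\gamma,\ \gamma + (C/k)|\ln\beta|]$, matching the variable $x = \beta^{p/k} \in (0,1]$ used there.

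The key steps run in order. First, evaluate at $p=0$: $x=1$ gives $h_{\max} = \gamma + (C/k)|\ln\beta|$. This is the most regular point, where $f = d$ by the boundary check in Theorem~\ref{thm:characterization}(iii). Second, let $p\to\infty$: $x \to 0$ gives $h_{\min} = \gamma$, where $f \to d - C$. This endpoint is consistent with Corollary~\ref{cor:branch}, which identifies $\gamma$ as the exponent of the always-maximal branch, since $\esssup W = r^\gamma$ by Lemma~\ref{lem:determinacy}(i). Third, subtract the two endpoints to get $\Delta h = (C/k)|\ln\beta|$.

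The only point needing care is justifying that these are the true extremes of the spectrum's support, and not just the image of the restriction $p \ge 0$. I would handle it by monotonicity of $\zeta_p'$ together with the paper's convention that the spectrum is computed over $p \ge 0$ on the stated domain of Theorem~\ref{thm:characterization}(iii). The value $h_{\min}=\gamma$ is attained only in the limit; I would note that it is included as the closure point, carrying codimension $C$ as in Corollary~\ref{cor:branch}. In the degenerate case $C=0$ the formula gives width $0$, which is consistent with the monofractal edge case.
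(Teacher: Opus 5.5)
Your proposal is correct and follows the paper's route: the corollary is read directly off the parametrization in Theorem~\ref{thm:characterization}(iii). There, $x=\beta^{p/k}=1$ at $p=0$ gives $h_{\max}=\gamma+(C/k)|\ln\beta|$, and $x\to0$ as $p\to\infty$ gives $h_{\min}=\gamma$, with $f(\gamma)=d-C$ by continuity. Your remark that these endpoints depend on the paper's convention of taking the Legendre transform over $p\ge0$ is a worthwhile clarification the paper leaves implicit, since over all real $p$ one has $\zeta_p'\to\infty$ as $p\to-\infty$ and the regular side would be unbounded.
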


\begin{corollary}[Parameter-free stability constant]\label{cor:constant}
Since $|A| = C(1-\beta)\,|\ln r|$, the scale ratio cancels in
Theorem~\textup{\ref{thm:stability}}:
\[
  W_1\!\left(\frac{\eta}{\|\eta\|},\,\delta_\beta\right)
  \;\le\; \sqrt{\frac{1+\beta}{C(1-\beta)}}\;\sqrt{\varepsilon}
  \quad\textup{(i)},
  \qquad
  \le\; \sqrt{\frac{2}{C(1-\beta)}}\;\sqrt{\varepsilon}
  \quad\textup{(ii)},
\]
independent of $r$ and $k$ (and identical to the native continuous
constants of Corollary~\ref{cor:mrmstab}).  For fully developed
turbulence ($\beta = 2/3$, $C = 2$, in either the dissipation form
$k=1$ or the velocity form $k=3$) the constants are $\sqrt{5/2}
\approx 1.58$ and $\sqrt{3} \approx 1.73$: a measured violation
$\varepsilon$ of hierarchical symmetry confines the normalized
tilted jump measure within $1.58\sqrt{\varepsilon}$
(resp.\ $1.73\sqrt{\varepsilon}$) of $\delta_{2/3}$ in
Wasserstein-1 distance.
\end{corollary}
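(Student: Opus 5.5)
The plan is to substitute the identity $|A| = C(1-\beta)\,|\ln r|$ into the two constants of Theorem~\ref{thm:stability}. First I verify this identity from the definitions. By the parameter table and Lemma~\ref{lem:exponent}, $C = (\delta_0-\delta_\infty)/(1-\beta)$, so $\delta_0-\delta_\infty = C(1-\beta)$. Theorem~\ref{thm:stability} defines $A = (\delta_0-\delta_\infty)\ln r$. Since $\ln r<0$ and $C>0$, this gives $|A| = C(1-\beta)|\ln r|$.

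In the approximate setting, $\delta_\infty$ denotes the true limit, which exists by Step~0 of that proof. In that case $C$ should be read as $(\delta_0-\delta_\infty)/(1-\beta)$. Equivalently, $\|\eta\| = \mu_0 = (\delta_0-\delta_\infty)|\ln r|$ exactly, by Step~1 there. So the identity is a definition-level statement and needs no approximation.

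Next I substitute. Reading~(i) becomes $((1+\beta)|\ln r|/|A|)^{1/2} = ((1+\beta)/(C(1-\beta)))^{1/2}$. Reading~(ii) becomes $(2/(C(1-\beta)))^{1/2}$. In both, the factor $|\ln r|$ cancels. No $k$ appears, because $k$ enters only through $u=e^{kx}$ and the lattice, and $\beta$, $C$ are defined intrinsically from the exponents.

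I then match the result to Corollary~\ref{cor:mrmstab}. There $|A_c| = C(1-\beta)$ and $\ln r\mapsto -1$, so the constants coincide symbol for symbol. Sharpness is inherited unchanged, since the attaining families in the sharpness remark are expressed through $\varepsilon|\ln r|$ and $|A|$, and these cancel in the same way.

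Finally, the numerics. With $\beta=2/3$ and $C=2$ we have $C(1-\beta)=2/3$. Then $(1+\beta)/(2/3) = (5/3)(3/2) = 5/2$, giving $\sqrt{5/2}\approx1.58$, and $2/(2/3)=3$, giving $\sqrt3\approx1.73$. For $k=1$ versus $k=3$, note that $\beta$ and $C$ are the same numbers in the She--L\'ev\^eque parameterization, so the constants agree.

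The only genuinely delicate point is the interpretation of $C$ when A1 holds only approximately. The argument requires $C$ to be defined through the true $\delta_\infty$ and not through the fitted $d^*$. Under reading~(ii) one must take care not to substitute a fitted $C$, which would perturb the constant by $O(\varepsilon)$; I would state this convention explicitly. Everything else is direct substitution.
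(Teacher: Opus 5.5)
Your proposal is correct and follows the paper's own argument. The paper gives no separate proof: its justification is exactly the substitution of $|A| = C(1-\beta)|\ln r|$ into the constants of Theorem~\ref{thm:stability}, which is what you do, and your explicit convention that $C$ be computed from the true limit $\delta_\infty$ (so that $\|\eta\| = |A|$ exactly), not from the fitted $d^*$, is a useful clarification the paper leaves unstated.

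Your sharpness aside is not part of the statement and is best dropped. The two-atom family in the sharpness remark attains the variance bound of Step~2, but the Cauchy--Schwarz step is strict unless $|u-\beta|$ is $\eta$-a.e.\ constant. That fails for atoms near $0$ and in $(\beta,1)$ once $\beta \ge 1/2$, in particular at $\beta = 2/3$, so that family does not show the $W_1$ constants are attained.
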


\section{Concluding remarks}
\label{sec:discussion}

The results of this paper show that the hierarchical symmetry~A1
carries considerably more force than might be expected from its
appearance as a simple linear recurrence.  Within i.i.d.\
multiplicative cascades it is equivalent to the log-Poisson class,
with sharp stability constants and the exact propagation rate
$\Theta(\sqrt\varepsilon)$; beyond independence it pins all
asymptotic statistics (and provably nothing more); and in the
continuous category it selects exactly the compound Poisson
cascade at the generator level, while no finite moment window of
structure functions can do so.  The following directions remain
open.

\begin{enumerate}
\item \textbf{Lattice-only finite-state rigidity.}
  Theorem~\ref{thm:finitestate} assumes the closed exponent form for
  all real $p \ge 0$; under lattice-only A1 the identity-theorem
  step is unavailable, and Carlson-type interpolation is blocked by
  possible complex eigenvalue crossings of the tilted transfer
  matrix.  We expect the conclusion to persist.

\item \textbf{Determination of~$k$, and the joint-in-$k$ test.}
  The hierarchy step~$k$ is treated as given; in applications it
  must be estimated.  A1 at several steps simultaneously imposes the
  compatibility constraint $\ln\beta(k) \propto k$, and the
  statistical gain from the joint test is unquantified.

\item \textbf{Boundary cases.}  The log-normal class is the $\beta
  \to 1$ closure point of the log-Poisson family ($b \to 0$,
  $\lambda b^2 \to \sigma^2$): quantifying the degeneration of
  identifiability as $\beta \uparrow 1$ would unify the
  classification with its principal rival.  The maximal-intermittency
  limit $\beta \to 0$ likewise deserves analysis.

\item \textbf{Statistics of the A1 test.}  The present results are
  exact-population statements.  A finite-sample theory---error bars
  on $(\hat\beta, \hat d^*)$, power against log-normal and
  log-stable alternatives, with the reading-(ii) constants of
  Theorem~\ref{thm:stability} and Corollary~\ref{cor:mrmstab} as the
  operative null band---is the missing link between the
  classification and data; the window obstruction of
  Theorem~\ref{thm:window} dictates that such a theory be built on
  magnitude statistics rather than high-order structure functions.
\end{enumerate}

\backmatter

\bmhead{Funding}
The author received no funding for this work.

\bmhead{Conflict of interest}
The author declares no competing interests.

\bmhead{Data availability}
Data sharing is not applicable to this article as no datasets were
generated or analyzed during the current study.


\end{document}